\begin{document}

\title{Set-valued functions of bounded generalized variation and set-valued Young integrals
}

\titlerunning{Set-valued $BV_p$ functions  and set-valued Young integrals}        

\author{Mariusz Michta         \and
        Jerzy Motyl 
}


\institute{Mariusz Michta \at
              Faculty of Mathematics, Computer Science and Econometrics\\
University of Zielona G\'{o}ra\\
Szafrana 4a\\
65-516\ Zielona G\'{o}ra\\
Poland\\
              \email{m.michta@wmie.uz.zgora.pl}           
           \and
           Jerzy Motyl \at
              Faculty of Mathematics, Computer Science and Econometrics\\
University of Zielona G\'{o}ra\\
Szafrana 4a\\
65-516\ Zielona G\'{o}ra\\
Poland\\
\email{j.motyl@wmie.uz.zgora.pl (corresp. author)}
}

\date{Received: date / Accepted: date}
\maketitle

\begin{abstract}
The paper deals with some properties of set-valued functions having a bounded  Riesz p-variation. Set-valued integrals of a Young type for such multifunctions are introduced. Selection results and properties of such set-valued integrals are discussed. These integrals contain as a particular case set-valued stochastic integrals with respect to a fractional Brownian motion, and therefore, their properties are crucial for the investigation of solutions to stochastic differential inclusions driven by a fractional Brownian motion.
\keywords{H\"{o}lder-continuity \and set-valued function \and  set-valued Riesz p-variation \and set-valued Young integral \and selection \and generalized Steiner center   }
\subclass{Primary: 26A33 \and Secondary: 26A16 \and 26A45 \and 28B20 \and 47H04 }
\end{abstract}

\maketitle











\section{Introduction}
\label{intro}

Since the pioneering work of R.J. Aumann in 1965 \cite{Aumann}, the notion of set-valued integrals for multivalued functions has attracted the interest of many authors both from theoretical and practical points of view. In particular, the theory has been developed extensively, among others, with applications to optimal control theory, mathematical economics, theory of differential inclusions and set-valued differential equations, see e.g., \cite{Ahmed1}, \cite{Aubin}, \cite{AubinCell}, \cite{Kis0}, \cite{Laksh5}, \cite{Tolst}. Later, the notion of the integral for set-valued functions has been extended to a stochastic case, where set-valued It\^o integrals have been studied . Moreover, concepts of set-valued integrals, both deterministic and stochastic, were used to define the notion of fuzzy integrals applied in the theory of fuzzy differential equations, e.g., \cite{DiamKloeden}, \cite{LakshTolst}.  On the other hand, in a single-valued case, one can consider integration with respect to integrators such as fractional Brownian motion  which has H\"{o}lder continuous sample paths. In some cases such integrals can be understood in the sense of Young (\cite{Young}). Controlled differential equations driven by Young integrals have been studied by A. Lejay in \cite{Lelay}. A more advanced approach to controlled differential equations is based on the rough path integration theory initiated by T. Lions (\cite{Lyons}) and further examined in \cite{Coutin}, \cite{Friz}. Control and optimal control problems inspired the intensive expansion of differential and stochastic set-valued inclusions theory. Thus it seems reasonable to investigate also differential inclusions driven by a fractional Brownian motion and Young type integrals also. Recently, in \cite{Bailleul} the authors considered a Young type differential inclusion, where solutions were understood as Young integrals of appropriately regular selections of multivalued right-hand side. Set-valued Aumann or It\^o type integrals are useful toolls in the investigation of properties of solution sets to differential or stochastic inclusions and set-valued equations \cite{Ahmed2}, \cite{Fei}, \cite{Gorn}, \cite{Kis1}. Therefore, it is quite natural to introduce set-valued Young type integrals.  Motivated by this, the aim of this work is to introduce such set-valued  integrals and to investigate their properties, especially these which seem to be useful in the Young set-valued inclusions theory. It is known, that three of properties of Aumann set-valued integrals are crucial in the differential inclusions theory. Namely they are, the existence of a Castaing representation of the set of integrable selectors,  decomposability of this set and valuation of a Hausdorff distance between set-valued integrals by the distance between integrated multifunctions (see e.g., \cite{Hiai}).

Set-valued Young integrals considered in the paper deal with the class of set-valued functions having a bounded Riesz p-variation. Such integrals contain as a particular case set-valued stochastic integrals with respect to a fractional Brownian motion. Therefore, in our opinion, their properties are crucial not only for the existence of solutions to stochastic differential inclusions and set-valued stochastic differential equations driven by a fractional Brownian motion but also for useful properties of their solution sets.

\bigskip

The paper is organized as follows. In Section 2, we define a space of set-valued functions of a finite Riesz $p$-variation. Section 3 deals with the properties of sets of appropriately regular selections of such set-valued functions. Here we shall establish a new type of decomposability for sets of functions with a finite Riesz $p$-variation as well as their integral property. Finally, in Section 4, we introduce a set-valued Young type integral which is based on the sets of selections examined in Section 3. We shall investigate properties of this set-valued integral. 

\section{Finite p-variation set-valued functions }

Let $(X,\|\cdot \|)$ be a Banach space.  Denote by $Comp\left( X\right) $ and $Conv\left( X\right) $ the families of all nonempty and compact, and nonempty compact and convex subsets of $X$, respectively. The Hausdorff metric $H_{X}$ in $Comp\left( X\right) $ is defined by
$$
H_{X}\left( B,C\right) =\max \left\{ \overline{H}_{X }\left( B,C\right) ,\overline{H}_{X}\left( C,B\right) \right\} ,
$$
where $\overline{H}_{X}\left( B,C\right) =\sup_{b\in B}\mathrm{dist}_{X}\left( b,C\right) =\sup_{b\in B}\inf_{c\in C}\|c-b\|_X$. If $X$ is separable, then the space $\left( Comp\left( X\right) ,H_{X}\right) $ is a Polish  space and $\left( Conv\left( X\right),H_{X}\right) $ is its closed subspace. For $B,C,D,E\in Comp\left(X\right) $ we have, 
\begin{equation}
H_{X}\left( B+ C,D+ E\right) \leq H_{X}\left(B,D\right) +H_{X}\left( C,E\right)  \label{32}
\end{equation}
where $B+ C:=\left\{ b+c:b\in B,c\in C\right\} $ denotes the Minkowski sum of $B$ and $C$. Moreover, for $B,C,D\in Conv\left( X\right) $ the equality 
\begin{equation}
H_{X}\left( B+ D,C+ D\right) =H_{X}\left(B,C\right) \hbox{,}  \label{33}
\end{equation}
holds, see e.g., \cite{Laksh5} for details.

We use the notation
$$
\left\Vert A\right\Vert _{X}:=H_{X}\left( A,\left\{0\right\} \right) =\sup_{a\in A}\left\Vert a\right\Vert _{X}\hbox{ for }A\in Conv\left( X\right).
$$

\noindent Let $T>0$ and $\beta \in (0,1]$. For every function $f:[0,T]\rightarrow X$ we define
\[\left\Vert f\right\Vert _{\infty }=\sup_{t\in \lbrack 0,T]}\left\Vert f(t)\right\Vert _{X} \hbox{ and } M_{\beta }(f)=\sup_{0\leq s<t\leq T}\frac{\|f(t)-f(s)\|_{X}}{\left( t-s\right) ^{\beta }}.\]
By $\mathcal{C}^{\beta }\left( X\right) $ we denote the space of $\beta $-H\"{o}lder-continuous ( or shortly $\beta $-H\"{o}lder) functions  with a finite norm
$$
\left\Vert f\right\Vert _{\beta }:=\left\Vert f\right\Vert _{\infty
}+M_{\beta }(f).
$$

It can be shown that $\left( \mathcal{C}^{\beta },\| \cdot \|_{\beta }\right) $ is a Banach space. Similarly, for a set-valued function $F:[0,T]\rightarrow Comp\left( X\right) $ let
$$
\left\Vert F\right\Vert _{\beta }:=\left\Vert F\right\Vert _{\infty}+M_{\beta }(F)
$$
where
\[\| F\| _{\infty }=\sup_{t\in \lbrack 0,T]}\| F( t) \|_{X} \hbox{ and }M_{\beta }(F)=\sup_{0\leq s<t\leq T}\frac{H_{X}( F(t),F(s)) }{( t-s)^{\beta }}.\]
 A set-valued function $F$ is said to be $\beta $-H\"{o}lder if $\left\Vert F\right\Vert _{\beta }<\infty $. By $\mathcal{C}^{\beta }(Comp( X))$ we denote the space of all such set-valued functions. The space of $\beta $-H\"{o}lder set-valued functions having compact and convex values will be denoted by $\mathcal{C}^{\beta }(Conv(X))$.

\bigskip 

Let $(E,d)$ be a metric space. For every $0\leq a <b \leq T$, by $\Pi _n=\{t_i\}_{i=0}^n$ we denote a partition $a=t_0<t_2<...<t_n=b$ of the interval $[a,b]$. For every function $f:[0,T]\rightarrow E$ and $1\leq p< \infty $ we define its Young p-variation on $[a,b]$ by the formula
\[Var_p(f,[a,b])= \sup _{\Pi} \sum _{i=1}^{n}\big(d(f(t_{i-1}),f(t_i)\big)^p\] 
and a Riesz p-variation on $[a,b]$ by the formula
\[V_p(f,[a,b])= \sup _{\Pi} \sum _{i=1}^{n}\frac{\big(d(f(t_{i-1}),f(t_i)\big)^p}{(t_{i}-t_{i-1})^{p-1}}.\] 
We denote $Var_p(f,[0,T])$ by $Var_p(f)$ and $V_p(f,[0,T])$ by $V_p(f)$, respectively. If $Var_p(f)<\infty$ (resp., $V_p(f)<\infty)$ we call $f$ a bounded Young (resp., Riesz) p-variation function. The class of all functions of bounded p-variations will be denoted by $BVar_p([0,T],E)$ or $BV_p([0,T],E)$, respectively. In the sequel we denote spaces $BVar_p([0,T],E)$ and $BV_p([0,T],E)$ simply by $BVar_p(E)$ and $BV_p(E)$, respectively. If $(X,\|\cdot\|_{X})$ is a Banach space, then $BVar_p(X)$ or $BV_p(X)$  with norms $ \|f\|_{Var_p}=\sup_{t\in[0,T]}\|f(t)\|_X\! +\!(Var_p(f))^{1/p}$ and $ \|f\|_{V_p}=\sup_{t\in[0,T]}\| f(t)\|_X+(V_p(f))^{1/p}$, respectively, are Banach spaces. For $X=R^d$ and considered with the Euclidean norm we will use the notation $\|x\|$ instead of $\|x\|_{R^d}$.

\bigskip
We collect some properties of functions of bounded $V_p$-variation in the following proposition.

\bigskip

\begin{proposition}
\label{Prop1} (\cite{ChistGalkin}, \cite{Chi}). Let $f:[0,T]\rightarrow E$. Then, for every $1\leq p< \infty $, the following conditions hold:
\smallskip

(a) For every $[a,b]\subset [0,T]$ and  $ a\leq t\leq b$ we have
\[V_p(f,[a,t])+V_p(f,[t,b])= V_p(f,[a,b]).\]
\smallskip

(b) if $f\in BV_p(E)$, then $V_1(f,[a,b])\leq (b-a)^{1-1/p}\big(V_p(f,[a,b])\big)^{1/p}$ for every $[a,b]\subset [0,T]$, (Jensen inequality).
\smallskip

(c) if $(f_n)$ is a sequence such that $\lim_{n\rightarrow\infty} d(f_n(t),f(t))=0$ for every $t\in [a,b]$ then $V_p(f,[a,b])\leq \liminf_{n\rightarrow\infty}V_p(f_n,[a,b]).$
\smallskip
 
(d) if $X$ is a reflexive Banach space and $f\in BV_p(X)$, then $f$ admits a strong derivative $f'$ and $V_p(f,[a,b])=\int _a^b\|f'(t)\|_X^pdt$, (Riesz theorem). 
\end{proposition}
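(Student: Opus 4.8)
The plan is to treat (a)--(c) by elementary partition arguments and to reduce (d) to the classical Riesz representation theorem together with the Radon--Nikodym property of reflexive spaces.

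For (a), the crucial observation is that $g(x,y):=x^p/y^{p-1}$ is jointly convex on $[0,\infty)\times(0,\infty)$ for $p\geq 1$ (its Hessian is positive semidefinite), and since it is positively homogeneous of degree one this makes it subadditive, i.e. $g(x_1,y_1)+g(x_2,y_2)\geq g(x_1+x_2,y_1+y_2)$. Applying this with the $x_j$ equal to the $d$-increments and the $y_j$ equal to the length increments, together with the triangle inequality $d(f(t_{i-1}),f(t_i))\leq d(f(t_{i-1}),f(s))+d(f(s),f(t_i))$ and the monotonicity of $u\mapsto u^p$, shows that inserting a point into a partition never decreases the Riesz sum $\sum_i d(f(t_{i-1}),f(t_i))^p/(t_i-t_{i-1})^{p-1}$. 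Then the inequality ``$\geq$'' in (a) follows by concatenating an arbitrary partition of $[a,t]$ with one of $[t,b]$, while ``$\leq$'' follows because every partition of $[a,b]$ can be refined to one containing $t$, which then splits into a partition of $[a,t]$ and one of $[t,b]$; taking suprema gives equality.

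For (b), I would apply H\"older's inequality with exponents $p$ and $p/(p-1)$ to each partition sum, writing $d(f(t_{i-1}),f(t_i)) = \big(d(f(t_{i-1}),f(t_i))/(t_i-t_{i-1})^{(p-1)/p}\big)\cdot(t_i-t_{i-1})^{(p-1)/p}$; this bounds $\sum_i d(f(t_{i-1}),f(t_i))$ by $\big(\sum_i d(f(t_{i-1}),f(t_i))^p/(t_i-t_{i-1})^{p-1}\big)^{1/p}(b-a)^{(p-1)/p}$, and taking the supremum over partitions yields the claim since $(p-1)/p=1-1/p$. For (c), fix a partition $\Pi$; since $\Pi$ has finitely many points and $d$ is continuous, pointwise convergence $f_n\to f$ gives $\sum_{\Pi} d(f(t_{i-1}),f(t_i))^p/(t_i-t_{i-1})^{p-1} = \lim_n \sum_{\Pi} d(f_n(t_{i-1}),f_n(t_i))^p/(t_i-t_{i-1})^{p-1} \leq \liminf_n V_p(f_n,[a,b])$, and taking the supremum over $\Pi$ on the left finishes it.

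Part (d) is the substantial one and the place where reflexivity enters. From (a) the map $\varphi(t):=V_p(f,[a,t])$ is nondecreasing and finite, and (b) applied on a subinterval gives $\|f(t)-f(s)\|_X\leq (t-s)^{(p-1)/p}(\varphi(t)-\varphi(s))^{1/p}$, so $f$ is absolutely continuous. Because $X$ is reflexive it has the Radon--Nikodym property, hence $f$ is strongly differentiable a.e., $f'\in L^1([a,b];X)$, and $f(t)=f(a)+\int_a^t f'(s)\,ds$. The bound $V_p(f,[a,b])\leq \int_a^b\|f'(t)\|_X^p\,dt$ then comes from estimating, on each subinterval of a partition, $\|f(t_i)-f(t_{i-1})\|_X^p/(t_i-t_{i-1})^{p-1} = \big\|\int_{t_{i-1}}^{t_i} f'\big\|_X^p/(t_i-t_{i-1})^{p-1} \leq \int_{t_{i-1}}^{t_i}\|f'(s)\|_X^p\,ds$ by Jensen's inequality, and summing; the reverse inequality follows by a Lebesgue differentiation / Fatou argument applied to the difference quotients $\|(f(s+h)-f(s))/h\|_X^p$ together with (a), or equivalently by approximating $f'$ in $L^p$ by step functions. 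I expect the a.e.\ differentiability to be the main obstacle: it genuinely requires the Radon--Nikodym property, and without reflexivity (or some RNP hypothesis) the statement of (d) fails, whereas the remaining steps are routine measure-theoretic bookkeeping. Since this is precisely the vector-valued Riesz theorem, one may alternatively just invoke \cite{ChistGalkin}, \cite{Chi}.
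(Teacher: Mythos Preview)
Your argument is correct. Note, however, that the paper does not actually prove Proposition~\ref{Prop1}: it is stated as a collection of known facts with a reference to \cite{ChistGalkin} and \cite{Chi}, and no proof is given in the text. So there is nothing to compare your approach against; what you have written is essentially the standard proof one finds in those sources. The convexity/homogeneity argument for the refinement monotonicity in (a), the H\"older estimate in (b), the finite-partition Fatou step in (c), and the absolute-continuity-plus-RNP route in (d) are exactly the classical ingredients.

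One small caveat worth recording: your proof of (d) tacitly uses $p>1$ in the absolute-continuity step, since for $p=1$ the bound $\|f(t)-f(s)\|\leq (t-s)^{1-1/p}(\varphi(t)-\varphi(s))^{1/p}$ degenerates and a jump function shows the conclusion fails. The Riesz theorem is indeed a $p>1$ statement, so the ``for every $1\leq p<\infty$'' in the proposition should be read as applying to (a)--(c) only, with (d) restricted to $p>1$; this is how it is stated in \cite{ChistGalkin}.
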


Let $(X,\|\cdot \|)$ be a Banach space and let $\Pi _m :0=t_{0}<t_{1}<...<t_{m}=T$ be a partition of the interval $[0,T]$. Given a set-valued function $F:[0,T]\rightarrow Comp\left( X\right) $ we set
$$
V_{p}(F,\Pi _m ):=\sum\limits_{i=1}^{m}\frac{H_{X}^{p}(F\left(t_{i}\right) ,F\left( t_{i-1}\right) )}{(t_{i}-t_{i-1})^{p-1}}.
$$
Then by a Riesz p-variation on $[0,T]$ we mean the quantity 
$$
V_{p}(F):=\sup_{\Pi _m }V_{p}(F,\Pi _m ).
$$

By  $BV_{p}(Comp\left( X\right) )$ we denote the space of all set-valued functions from $[0,T]$ into $Comp\left( X\right) $ having finite  Riesz $p$-variation. 

\section{Selections of finite p-variation set-valued functions}

Let $T>0$ be given and let $F:[0,T]\rightarrow {\rm Comp}(X)$ be a measurable set-valued function. A measurable function $f:[0,T]\rightarrow X$ is called a measurable selection of $F$ if $f(t)\in F(t)$ for all $t\in [0,T]$. For $1\leq p< \infty $, define the set
\[S_{L^{p}}(F)=\{f\in L^p([0,T],X):\;f(t)\in F(t) {\;\rm a.e.\;t\in [0,T]}\}.\]
$S_{L^{p}}(F)$ is a closed subset of $L^p([0,T],X)$. It is nonempty if $F$ is p-integrably bounded i.e., if there exists $g\in L^p([0,T]$ such that $\|F(t)\|_X\leq g(t)$ for a.e. $t\in [0,T]$. In such a case there exists a sequence $(f_n)\subset S_{L^{p}}(F)$ such that $F(t)=\overline{\{f_n(t)\}_{n=1}^{\infty}}$ for all $t\in [0,T]$. The sequence $(f_n)$ is called an $L^p$-Castaing representation for $F$. For other properties of measurable set-valued functions and their measurable selections see e.g., \cite{AubinFr}.

\begin{definition}
\label{Def1} {\rm Let  $F:[0,T]\rightarrow {\rm Comp}(R^d)$ be a set-valued function. For $1\leq p< \infty $, define 
\[S_{V_{p}}(F):=\{f\in BV_{p}(R^{d}):f(t)\in F(t),t\in \lbrack 0,T]\},  \label{selV}\]
the set of selections of $F$ with a bounded 
Riesz p-variation.}
\end{definition}

Let $F\in \mathcal{C}^{\beta }\left( Comp\left( R^{d}\right)\right) $. Such set-valued functions need not admit any H\"{o}lder or even continuous selection, see e.g., \cite{ChistGalkin}. However, considering the smaller class $BV_p\left( Comp\left( R^{d}\right)\right)\subset \mathcal{C}^{\beta }\left( Comp\left( R^{d}\right)\right)$, the following selection theorem holds true. 

\begin{proposition}
\label{prop2} (\cite{Chi}) Let  $F:[0,T]\rightarrow {\rm Comp}(R^d)$ be a set-valued function. If $F\in BV_p({\rm Comp}(R^d))$ for some $1\leq p< \infty $ then there exist a function $\phi \in BV_p(R^d)$ and a sequence of equi-Lipschitzean functions $(g_n)_{n=1}^{\infty}$ with Lipschitz constants $L_n\leq 1$ such that taking $f_n:=g_n\circ\phi$, we have $V_p(f_n,[a,b])\leq V_p(F,[a,b])$ for every $0\leq a<b\leq T$ and $F(t)=\overline{\{f_n(t)\}_{n=1}^{\infty}}$ for every $t\in [0,T]$. The set $\{f_n \}_{n=1}^{\infty}$ is a $V_p$-Castaing representation  for $F$.\label{Prop9a} 
\end{proposition}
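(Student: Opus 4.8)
The plan is to turn the statement into a Lipschitz selection problem by an arc-length type reparametrization, then to assemble a Castaing representation of the reparametrized multifunction out of equi-Lipschitz selections, and finally to pull everything back. \emph{Step 1 (reparametrization).} Put $\phi(t):=V_1(F,[0,t])$, regarded as a function $[0,T]\to R\subseteq R^d$. It is finite by the Jensen inequality of Proposition~\ref{Prop1}(b), nondecreasing with $\phi(0)=0$, and combining the additivity of Proposition~\ref{Prop1}(a) with that Jensen inequality gives, for every partition $\{t_i\}$ of any $[a,b]\subseteq[0,T]$,
\[\sum_i\frac{|\phi(t_i)-\phi(t_{i-1})|^p}{(t_i-t_{i-1})^{p-1}}=\sum_i\frac{V_1(F,[t_{i-1},t_i])^p}{(t_i-t_{i-1})^{p-1}}\le\sum_i V_p(F,[t_{i-1},t_i])=V_p(F,[a,b]).\]
Hence $V_p(\phi,[a,b])\le V_p(F,[a,b])$ for all $[a,b]\subseteq[0,T]$; in particular $\phi\in BV_p(R^d)$, and for $p>1$ the same bounds make $\phi$ $(1-1/p)$-H\"older, hence continuous.

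\emph{Step 2 (flattening $F$).} If $\phi(s)=\phi(t)$ with $s<t$, then $V_1(F,[s,t])=\phi(t)-\phi(s)=0$, so $H_{R^d}(F(u),F(u'))\le V_1(F,[u,u'])=0$ for $s\le u\le u'\le t$ and $F$ is constant on $[s,t]$. Thus $\widetilde F(\phi(t)):=F(t)$ unambiguously defines a set-valued map on $\phi([0,T])$, and for $\phi(s)\le\phi(t)$ with representatives $s\le t$,
\[H_{R^d}\big(\widetilde F(\phi(s)),\widetilde F(\phi(t))\big)=H_{R^d}\big(F(s),F(t)\big)\le V_1(F,[s,t])=|\phi(t)-\phi(s)|,\]
so $\widetilde F$ is $1$-Lipschitz on $\phi([0,T])$ and extends uniquely to a $1$-Lipschitz map $\widetilde F\colon K\to{\rm Comp}(R^d)$ on the compact set $K:=\overline{\phi([0,T])}$ (which is the interval $[0,V_1(F)]$ when $p>1$).

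\emph{Step 3 (equi-Lipschitz Castaing representation of $\widetilde F$).} Since $\widetilde F$ is continuous with compact values on the compact $K$, its graph $\Gamma=\{(y,x):y\in K,\ x\in\widetilde F(y)\}$ is compact, hence separable; fix a countable dense set $\{(y_j,x_j)\}_{j\ge1}\subseteq\Gamma$. For each $j$ I would build a $1$-Lipschitz selection $\sigma_j\colon K\to R^d$ of $\widetilde F$ with $\sigma_j(y_j)=x_j$: along the dyadic meshes of step $2^{-n}$ anchored at $y_j$, define broken lines $\sigma_j^{(n)}$ whose nodal values, starting from $x_j$, are obtained by the greedy rule ``take a nearest point of $\widetilde F$ at the next node''; this is legitimate since $\mathrm{dist}(z,\widetilde F(s'))\le H_{R^d}(\widetilde F(s),\widetilde F(s'))\le|s-s'|$ whenever $z\in\widetilde F(s)$, so each $\sigma_j^{(n)}$ is $1$-Lipschitz and uniformly bounded. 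By the Arzel\`a--Ascoli theorem a subsequence converges uniformly to a $1$-Lipschitz $\sigma_j$ with $\sigma_j(y_j)=x_j$, and the estimate $\mathrm{dist}(\sigma_j^{(n)}(y),\widetilde F(y))\le 2^{1-n}$, valid for all $y$, forces $\sigma_j(y)\in\widetilde F(y)$ for every $y$, the values being closed. Moreover $\{\sigma_j(y)\}_j$ is dense in $\widetilde F(y)$: given $x\in\widetilde F(y)$ pick $(y_{j_m},x_{j_m})\to(y,x)$ in $\Gamma$; then $\|\sigma_{j_m}(y)-x\|\le|y-y_{j_m}|+\|x_{j_m}-x\|\to0$. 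Finally extend each $\sigma_j$ to a $1$-Lipschitz $g_j\colon R\to R^d$ (linear interpolation across the components of $[\min K,\max K]\setminus K$ and constant continuation outside, or Kirszbraun's extension theorem). I expect this step to be the main obstacle: the non-convexity of the values of $\widetilde F$ is precisely what rules out the (Lipschitz) metric projection or generalized Steiner-point selections and forces the broken-line/compactness construction.

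\emph{Step 4 (conclusion).} Set $f_j:=g_j\circ\phi=\sigma_j\circ\phi$. Then $f_j(t)=\sigma_j(\phi(t))\in\widetilde F(\phi(t))=F(t)$ and $\overline{\{f_j(t)\}_j}=\overline{\{\sigma_j(\phi(t))\}_j}=\widetilde F(\phi(t))=F(t)$ for every $t\in[0,T]$, so $(f_j)$ is a Castaing representation of $F$ and $(g_j)$ is equi-Lipschitz with constants $L_j\le1$. Since each $g_j$ is $1$-Lipschitz, for every partition $\{t_i\}$ of $[a,b]$,
\[\sum_i\frac{\|f_j(t_i)-f_j(t_{i-1})\|^p}{(t_i-t_{i-1})^{p-1}}\le\sum_i\frac{|\phi(t_i)-\phi(t_{i-1})|^p}{(t_i-t_{i-1})^{p-1}}\le V_p(\phi,[a,b])\le V_p(F,[a,b]),\]
so $V_p(f_j,[a,b])\le V_p(F,[a,b])$ for every $[a,b]\subseteq[0,T]$; in particular $f_j\in BV_p(R^d)$, which yields the required $V_p$-Castaing representation of $F$.
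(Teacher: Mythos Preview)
Your argument is correct and is precisely the arc-length reparametrization strategy of Chistyakov's paper \cite{Chi}, which is exactly the source the present paper cites for this proposition (the paper itself gives no proof). The only place to be slightly more careful is Step~3 in the case $p=1$: there $\phi$ need not be continuous, so $K=\overline{\phi([0,T])}$ may fail to be an interval and your ``dyadic mesh anchored at $y_j$'' may land outside $K$. The clean fix is to first extend $\widetilde F$ from $K$ to the interval $[\min K,\max K]$ by constant continuation across each complementary gap (this preserves the $1$-Lipschitz bound since the gap endpoints lie in $K$), run the broken-line construction on that interval, and only then restrict; this is harmless and does not affect your estimates. Everything else --- the inequality $V_p(\phi,[a,b])\le V_p(F,[a,b])$ via Proposition~\ref{Prop1}(a),(b), the well-definedness and $1$-Lipschitz property of $\widetilde F$, the Arzel\`a--Ascoli limit yielding a $1$-Lipschitz selection through a prescribed graph point, the density argument, and the final pullback bound --- is sound.
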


Let us note, that the set $S_{V_{p}}(F)$ need not be closed in the topology of point convergence even if $F$ is bounded. 
\begin{example}
\label{Ex1} {\rm The set $S_{V_{p}}(F)$ need not be closed in the topology of point convergence even if $F$ is bounded. To see this, let $W$ be a Wiener process defined on some adequate probability space $(\Omega, \mathcal{F},P)$. Let $W(\cdot , \bar{ \omega} )$ denote its trajectory connected with a fixed $\bar{ \omega}\in \Omega$. Then $M=\sup_{t\in [0,T]}| W(t , \bar{ \omega} )|<\infty $, because of continuity of trajectories of a Wiener process. Let $F:[0,T]\rightarrow Comp(R^1)$ be a set-valued function defined by formula $F(t)=[-M,M]$ for every $t\in [0,T]$. Let  $(\Pi _n)_{n=1}^{\infty}=(\{t_i\}_{i=1}^n)_{n=1}^{\infty}$ denote a sequence of normal partitions $0=t_1<t_2<...<t_n=T$ of the interval $[0,T]$ and let $W_n(\cdot , \bar{ \omega} )$ denote regularizations of $W(\cdot , \bar{ \omega} )$ defined by the formula below 
\[\begin{array}{l} \;W_n(t , \bar{ \omega} )= \left\{ \begin{array}{cl} W(t_i , \bar{ \omega} ) &{\rm ~ for}
\;\;\;t=t_i \\
\hbox{is linear} & {\rm ~ for}\;\;\;t\in(t_i,t_{i+1}) \\
\end{array}
\right. 
\end{array}.
\]
It is clear that $W_n(t , \bar{ \omega})\in F(t)$. Moreover, for a linear function $g(t)=at+b$, we have $V_p(g,[t_i,t_{i+1}])=|a|^p(t_{i+1} -t_i)<\infty$. Therefore, we get by Proposition \ref{Prop1}(a),
\[V_p(W_n(\cdot , \bar{ \omega}),[0,T] )=\sum^{n-1}_{i=1}V_p(W_n(\cdot , \bar{ \omega}),[t_i,t_{i+1}])\]
\[\leq \max \{\frac{|W(t_{i+1}, \bar{ \omega})-W(t_i , \bar{ \omega})|^p}{(t_{i+1}-t_i)^p}, i=1,2,...,n-1\}\cdot \sum^{n-1}_{i=1}(t_{i+1}-t_i)<\infty .\]
It means that $W_n(\cdot , \bar{ \omega})\in S_{V_{p}}(F)$. But $W_n(t, \bar{ \omega})$ tends to $W(t, \bar{ \omega})$ for every $t\in [0,T]$. Since $V_p(W(\cdot, \bar{ \omega})=+\infty$ for every $1\leq p<2$, then $W(\cdot, \bar{ \omega})\notin S_{V_{p}}(F)$.}
\end{example}

However,  the set $S_{V_{p}}(F)$  is closed in the norm $ \|\cdot\|_{V_p}$ because of Jensen inequality $\|f_n(t)-f(t)\|\leq \max\{1,T^{1-1/p}\}\|f_n-f\|_{V_p}\rightarrow 0$ and Proposition \ref{Prop1}(c).

\begin{proposition}  
 Let  $F:[0,T]\rightarrow {\rm Comp}(R^d)$ be a set-valued function, $F\in BV_p({\rm Comp}(R^d))$ for some $1\leq p< \infty $. Let $\{f_m \}_{m=1}^{\infty}$ be the $V_p$-Castaing representation of $F$ given in Proposition \ref{prop2}. Then, for every $f\in S_{V_{p}}(F)$ and every $\epsilon >0$, there exist a finite measuarable covering $A_1,...,A_n$ of the interval $[0,T]$ and functions $f_{k_1},...,f_{k_n}\in \{f_m \}_{m=1}^{\infty}$ such that 
\[\|f-\sum_{j=1}^n{\rm 1}\hspace{-1mm}{\rm I} _{A_j}\cdot f_{k_j}\|_{L^p}<\epsilon.\]
Moreover, for every $f\in S_{V_{p}}(F)$ and every $\epsilon >0$, there exist $n\geq 1$, a partition $\Pi _n:0=t_0<t_1<...<t_n=T$ and functions $f_{k_0},...,f_{k_n}\in \{f_m \}_{m=1}^{\infty}$ such that 
\[\|f-\sum_{j=0}^{n-1}{\rm 1}\hspace{-1mm}{\rm I} _{[t_j,t_{j+1})}\cdot f_{k_j}\|_{\infty}<\epsilon.\]
\label{Prop3} 
\end{proposition}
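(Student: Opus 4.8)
The plan is to first reduce to the case of a bounded measurable covering and then to refine it into a partition by intervals. For the first part, I would argue as follows. Fix $f\in S_{V_p}(F)$ and $\epsilon>0$. By Proposition \ref{prop2}, the sequence $(f_m)$ is a $V_p$-Castaing representation of $F$, so in particular $f(t)\in F(t)=\overline{\{f_m(t)\}_{m=1}^\infty}$ for every $t$; hence $\inf_{m\ge1}\|f(t)-f_m(t)\|=0$ pointwise. The idea is to choose, for each $t$, an index $m(t)$ with $\|f(t)-f_{m(t)}(t)\|$ small, and then to partition $[0,T]$ measurably according to which of finitely many indices has been selected. Concretely, since $f$ and all $f_m$ are bounded (being of bounded $V_p$-variation, hence $L^\infty$), one has a uniform bound $M_0$ on $\|f\|_\infty$ and on $\sup_m\|f_m\|_\infty$, so it suffices to make the approximation uniformly small and then integrate. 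The technical point here is to produce a \emph{measurable} selection of indices: define $A_m=\{t\in[0,T]:\|f(t)-f_m(t)\|<\delta\}$ for a suitable $\delta$; these sets are measurable (as $f$ and $f_m$ are measurable) and cover $[0,T]$; disjointify them as $A_1'=A_1$, $A_j'=A_j\setminus\bigcup_{i<j}A_i$, and then truncate the resulting countable partition at a finite stage $n$ using $\sigma$-additivity of Lebesgue measure, absorbing the tail into the error by boundedness. This yields a finite measurable partition (hence covering) $A_1',\dots,A_n'$ and indices $k_1,\dots,k_n$ with $\|f(t)-\sum_{j}\mathrm{1\!\!I}_{A_j'}(t)f_{k_j}(t)\|<\delta$ pointwise off a null set, and choosing $\delta$ so that $\delta\,T^{1/p}<\epsilon$ gives the $L^p$ estimate.

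For the second part, the point is to replace the arbitrary measurable sets $A_j'$ by half-open intervals, at the price of passing from the $L^p$-norm to the sup-norm, which requires using the regularity of $f$ and of the $f_m$. Here I would use that all the relevant functions lie in $BV_p(R^d)$, hence by Proposition \ref{Prop1}(b) their $V_1$-variation on a subinterval $[a,b]$ is controlled by $(b-a)^{1-1/p}(V_p(\cdot,[a,b]))^{1/p}$; in particular, by Proposition \ref{prop2}, $V_p(f_m,[a,b])\le V_p(F,[a,b])$ and $V_p(f,[a,b])\le V_p(F,[a,b])$ for all $m$, so all these functions have \emph{equi}-controlled oscillation that tends to $0$ as $b-a\to0$, since $V_p(F,[a,b])\to0$ by the additivity in Proposition \ref{Prop1}(a). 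Thus, choosing a partition $\Pi_n:0=t_0<\dots<t_n=T$ fine enough that $V_p(F,[t_j,t_{j+1}])^{1/p}(t_{j+1}-t_j)^{1-1/p}<\epsilon/3$ for every $j$, each of $f,f_m$ varies by less than $\epsilon/3$ across each subinterval. On $[t_j,t_{j+1})$ pick a point $s_j$ and an index $k_j$ with $\|f(s_j)-f_{k_j}(s_j)\|<\epsilon/3$ (possible since $f(s_j)\in\overline{\{f_m(s_j)\}}$); then for any $t\in[t_j,t_{j+1})$,
\[
\|f(t)-f_{k_j}(t)\|\le\|f(t)-f(s_j)\|+\|f(s_j)-f_{k_j}(s_j)\|+\|f_{k_j}(s_j)-f_{k_j}(t)\|<\epsilon,
\]
which is exactly the claimed sup-norm bound with $f_{k_0},\dots,f_{k_{n-1}}$ (and $f_{k_n}$ can be taken arbitrarily, since the point $t_n=T$ contributes nothing to the half-open cover, or one adjusts the indexing).

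The main obstacle I anticipate is the measurability of the index selection in the first part: one must be careful that the sets $A_m$ are genuinely measurable and that disjointifying and truncating does not break the pointwise estimate. A secondary subtlety is whether "pointwise" bounds on $\|f(s_j)-f_{k_j}(s_j)\|$ suffice — they do, because we only need the approximation at the single chosen point $s_j$ per subinterval, and the rest is handled by the equi-oscillation estimate, which is where Proposition \ref{Prop1}(a),(b) together with the bound $V_p(f_m,\cdot)\le V_p(F,\cdot)$ from Proposition \ref{prop2} do the real work. Everything else is bookkeeping with the triangle inequality and the elementary estimate $\|g\|_{L^p}\le T^{1/p}\|g\|_\infty$.
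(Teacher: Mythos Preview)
Your argument for the second part is essentially the paper's own proof: pick a fine partition using equi-uniform continuity, choose a good approximating index at one point of each subinterval, and use the triangle inequality with an $\epsilon/3$ split. One slip: you claim $V_p(f,[a,b])\le V_p(F,[a,b])$ for an \emph{arbitrary} $f\in S_{V_p}(F)$, citing Proposition~\ref{prop2}, but that proposition only gives this bound for the particular Castaing selectors $f_m$, and the inequality is false in general (take $F$ constant and $f$ a nonconstant selection). This does not damage the proof, since all you actually need is that the single function $f$ is uniformly continuous, which follows from $V_p(f)<\infty$ via Proposition~\ref{Prop1}(a),(b), while the \emph{uniform} bound $V_p(f_m,[a,b])\le V_p(F,[a,b])$ is what guarantees that a single $\delta$ works simultaneously for all $f_m$. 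Just separate the two sources of continuity.

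For the first part, the paper takes a shortcut you may not have seen: since $S_{V_p}(F)\subset S_{L^p}(F)$ and the $V_p$-Castaing representation is in particular an $L^p$-Castaing representation, the $L^p$ approximation by finitely many indicator pieces is exactly Lemma~1.3 of Hiai--Umegaki \cite{Hiai}, so the paper simply cites that. Your direct construction (define $A_m=\{t:\|f(t)-f_m(t)\|<\delta\}$, disjointify, truncate, absorb the tail using the uniform $L^\infty$ bound) is a correct hands-on proof of that lemma in this setting; just be explicit that the leftover tail set is thrown into one of the $A_j$'s (with any index assigned) so that you genuinely have a finite covering.
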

\begin{proof} {\rm  Since $S_{V_{p}}(F)\subset S_{L^p}(F)$ and the $V_p$-Castaing representation of $F$ is also an $L^p$-Castaing representation of $F$ introduced in \cite{Cas1}, then the proof follows by Lemma 1.3 of \cite{Hiai}.}

{\rm We prove second inequality. Let $f\in S_{V_{p}}(F)$ be arbitrary taken. There exists $\delta $ such that $\|f(t)-f(s)\|< \epsilon/3$ and $\|f_m(t)-f_m(s)\|< \epsilon/3$ for every $|t-s|<\delta $ (see Proposition \ref{Prop1}). Let us take a partition $\Pi _n:0<\delta <2\delta <...<n\delta<T$. Since $f(t)\in  \overline{\{f_m(t)\}_{m=1}^{\infty}}$, then for every $k=0,1,...,n$ there exists $m_k$ such that $\|f(k\delta )-f_{m_k}(k\delta )\|<\epsilon/3$. Therefore, $\|f(t)-f_{m_k}(t)\|< \epsilon$ for $t\in[k\delta,\min\{(k+1)\delta ,T\}]. $

Thus,
\[ \|f-\sum_{k=0}^{n-1}{\rm 1}\hspace{-1mm}{\rm I} _{[t_k,t_{k+1})}\cdot f_{m_k}\|_{\infty}=\|f - \sum _{k=0}^{n-1}{\rm 1}\hspace{-1mm}{\rm I} _{[k\delta,\;\min\{(k+1)\delta ,\;T\}]}\cdot f_{m_k}\|_{\infty}\leq\epsilon .\]}
\end{proof}

\bigskip

Let us note, that a similar approximation property with respect to $V_p$-variation norm need not hold true.

\bigskip

Now we introduce the notion of $V_p$-decomposable selections of set-valued functions and investigate their properties.

\bigskip

Let $(\Omega,\mathcal{A},\mu)$ be a measure space. A set $\Lambda\subset L^p(\Omega,\mathcal{A},{\rm I}\!{\rm R}^d)$  is said to be $L^p$-decomposable, if for every $f_1,f_2\in \Lambda$ and every $A\in\mathcal{A}$ one has ${\rm 1}\hspace{-1mm}{\rm I}_A\cdot f_1+{\rm 1}\hspace{-1mm}{\rm I}_{A^\sim}\cdot f_2\in \Lambda$, where $A^\sim$ denotes the complement of the set $A$ in $\Omega$. For any $L^p$-decomposable sets $\mathcal{H},\mathcal{K}\subset L^p(\Omega,\mathcal{A},{\rm I}\!{\rm R}^d)$, the Minkowski sum $\mathcal{H}+\mathcal{K}$ is again an $L^p$-decomposable subset of the space $L^p(\Omega,\mathcal{A},{\rm I}\!{\rm R}^d)$. 

For a given set $B\subset L^p(\Omega,\mathcal{A},{\rm I}\!{\rm R}^d)$ we denote the set $\{\sum_{k=1}^{n}{\rm 1}\!\!\hspace{-1mm}{\rm I}_{A_k}\cdot \beta_k:\;A_k\in\mathcal{A},\;\beta_k\in B,\;n=1,2,...\}$  by ${\rm dec}_{L^p}(B)$ and call it an $L^p$-decomposable  hull of a set $B$. 

By $\overline{{\rm dec}}_{L^p}(B)$ we denote a closed $L^p$-decomposable hull of a set $B$. Similarly as in the case of convex and closed convex hulls, they are  the smallest $L^p$-decomposable and closed $L^p$-decomposable sets containing the set $B$, respectively. 

From this it follows that the set $S_{L^p}(F)$ consisting of all $L^p$-selectors of a given measurable set-valued function $F$ is always $L^p$-decomposable and therefore, $S_{L^p}(F)=\overline{{\rm dec}}_{L^p}(S_{L^p}(F))$. Conversely, if a closed set $\Lambda\subset L^p(\Omega,\mathcal{A},{\rm I}\!{\rm R}^d)$ is $L^p$-decomposable, then there exists a measurable set-valued function $F:\Omega\rightarrow R^d$ such that $\Lambda = S_{L^p}(F)$, (see \cite{Hiai}). For other properties of $L^p$-decomposable sets, see \cite{Frysz}.

$L^1$-decomposability of the set of $L^1$-selectors of a given measurable set-valued function $F$ is crucial for investigating properties of a set-valued Aumann integral of $F$defined by the formula
\[\int _A F(t)\;d\mu=\{\int _Af(t)\;d\mu:\;f\in S_{L^1}(F)\}.\]
Unfortunately, the set $S_{V_p}(F)$ need not be $L^p$-decomposable for any $p\geq1$ and therefore, if one defines a set-valued Young integral in the Aumann's sense, it is difficult to obtain its reasonable properties. This leads to the idea of a different type of decomposability called $V_p$-decomposability.

\bigskip

It follows from Proposition \ref{Prop1}(d) that a function $f$ belongs to $ BV_{p}(R^{d})$ if and only if its strong derivative $f'$ belongs to $L^p([0,T])$, $f(t)=f(0)+\int_0^tf'(s)ds$ and $V_p(f,[0,t])=\int _0^t\|f'(s)\|^pds$ for every $t\in [0,T]$. This property has been inspiring to the following definition.

\begin{definition}
\label{Def2} {\rm A set $\Lambda \subset BV_p\left( R^{d}\right)$ is {\it $V_p$-decomposable} (decomposable in the sense of its Riesz $p$-variation) if for every $f_1,f_2\in \Lambda$ and every $a\in [0,T]$ the function $f=f_1\oplus_a f_2$ defined by 
\[f(t)=f_1(0)+\int _0^t \big({\rm 1}\hspace{-1mm}{\rm I}_{[0,a)}(s)\cdot f_1'(s)+{\rm 1}\hspace{-1mm}{\rm I}_{[a,T]}(s)\cdot f_2'(s)\big)ds\]
belongs to the set $\Lambda$.

For a given set $B\subset BV_p\left( R^{d}\right)$ by ${\rm dec}_{V_p}(B)$ we denote a $V_p$-decomposable hull of a set $B$, i.e., the smallest $V_p$-decomposable  set containing the set $B$. }
\end{definition}

\begin{remark}
\label{Rem1} {\rm Every function $f=f_1\oplus_a f_2$ from Definition \ref{Def2} can be represented by the formula
 
\[\begin{array}{l} \;f(t)= \left\{ \begin{array}{cl} f_1(t) &{\rm ~ for}
\;\;\;0\leq t< a \\
f_2(t)-f_2(a)+f_1(a) & {\rm ~ for}\;\;\;a\leq t\leq T \\
\end{array}
\right. 
\end{array}.
\]
Moreover, for every $B\subset BV_p(\left( R^{d}\right)$ we have
\[{\rm dec}_{V_p}(B)=\{f\in BV_p\left( R^{d}\right):\;f(t)=f_1(0)+\int_0^t\big(\sum_{i=0}^{m-1}{\rm 1}\hspace{-1mm}{\rm I}_{[t_i,t_{i+1})}(s)\cdot f_i'(s)\big)ds:\]
$\Pi_m:\;0=t_0<...<t_m=T,\; m=1,2,...,\;\;f_i\in B,\;i=1,...,m\}.$}
\end{remark}

\begin{definition}
\label{Def3} {\rm A set $\mathcal{R}\subset BV_p\left( R^{d}\right)$ is called {\it an integral} if there exist $x_0\in R^d$ and a measurable and $p$-integrably bounded set-valued function $\Phi :[0,T]\rightarrow Conv(R^d)$ such that }
$$
\mathcal{R} = x_0+\!\int \Phi(s)ds=\! \{f\in BV_p(R^d)\!: f(\cdot )=x_0+\!\int _0^{\cdot }\!\phi(s)ds,\;\phi\in S_{L^p}(\Phi)\}.
\label{cra}
$$
We denote by $\mathcal{R}(t)$ the set 
\[\mathcal{R} (t)=\{f(t): f(\cdot )\in \mathcal{R}\}=\{x_0+\int_0^t \phi(s)ds,\;\phi\in S_{L^p}(\Phi)\}\]. 
\end{definition}

\begin{theorem}
\label{Th1}  Let $\mathcal{R}\subset BV_p(R^d)$ be an integral. Then $\mathcal{R} $ is closed with respect to the norm $\|\cdot \|_{\infty}$ and $V_p$-decomposable.
\end{theorem}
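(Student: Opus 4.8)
The plan is to prove the two assertions separately, each one reducing to a standard property of the selection set $S_{L^p}(\Phi)$, transported through the linear map $\phi\mapsto\big(t\mapsto x_0+\int_0^t\phi(s)\,ds\big)$.

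\emph{$V_p$-decomposability.} Fix $f_1,f_2\in\mathcal{R}$, say $f_i(t)=x_0+\int_0^t\phi_i(s)\,ds$ with $\phi_i\in S_{L^p}(\Phi)$, and fix $a\in[0,T]$. By Proposition~\ref{Prop1}(d) (applied in the reflexive space $R^d$) together with the discussion preceding Definition~\ref{Def2}, the strong derivative of $f_i$ equals $\phi_i$ a.e., and $f_i(0)=x_0$. Hence the function $f_1\oplus_a f_2$ of Definition~\ref{Def2} is precisely $t\mapsto x_0+\int_0^t\psi(s)\,ds$ with $\psi:={\rm 1}\hspace{-1mm}{\rm I}_{[0,a)}\phi_1+{\rm 1}\hspace{-1mm}{\rm I}_{[a,T]}\phi_2$. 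Since $\psi(s)$ equals $\phi_1(s)\in\Phi(s)$ on $[0,a)$ and $\phi_2(s)\in\Phi(s)$ on $[a,T]$, and $\psi\in L^p$, we get $\psi\in S_{L^p}(\Phi)$, so $f_1\oplus_a f_2\in\mathcal{R}$. (No convexity of the values of $\Phi$ is needed for this part.)

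\emph{Closedness in $\|\cdot\|_\infty$.} Let $f_n\in\mathcal{R}$ with $\|f_n-f\|_\infty\to 0$, and write $f_n(t)=x_0+\int_0^t\phi_n(s)\,ds$ with $\phi_n\in S_{L^p}(\Phi)$. Let $g\in L^p([0,T])$ be a $p$-integrable bound for $\Phi$, so $\|\phi_n(s)\|\le g(s)$ a.e. The set $K:=\{h\in L^p([0,T],R^d):\|h(s)\|\le g(s)\text{ a.e.}\}$ is bounded, convex and norm-closed; for $p>1$ it is relatively weakly compact by reflexivity of $L^p$, and for $p=1$ the domination by $g$ makes $K$ uniformly integrable, so $K$ is relatively weakly compact by the Dunford--Pettis theorem. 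In either case there is a subsequence with $\phi_{n_k}\rightharpoonup\phi$ weakly in $L^p$. Testing against ${\rm 1}\hspace{-1mm}{\rm I}_{[0,t]}$ (which lies in the relevant dual space, $L^q$ or $L^\infty$) gives $\int_0^t\phi_{n_k}(s)\,ds\to\int_0^t\phi(s)\,ds$ for every $t$, hence $f(t)=\lim_k f_{n_k}(t)=x_0+\int_0^t\phi(s)\,ds$. Since $\Phi$ is $Conv(R^d)$-valued, $S_{L^p}(\Phi)$ is convex and norm-closed, thus weakly closed, so the weak limit $\phi$ belongs to $S_{L^p}(\Phi)$; by Proposition~\ref{Prop1}(d), $f\in BV_p(R^d)$ with $V_p(f,[0,t])=\int_0^t\|\phi(s)\|^p\,ds$. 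Therefore $f\in\mathcal{R}$.

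\emph{Main obstacle.} The decomposability part is pure bookkeeping. The substantive point is the weak-compactness extraction in the closedness argument: one must handle $p=1$ by uniform integrability and Dunford--Pettis rather than by reflexivity, and one must use convexity of the values of $\Phi$ to ensure the norm-closed set $S_{L^p}(\Phi)$ is weakly closed, so that the weak limit $\phi$ is again an admissible density. It is also worth stressing that, since $f_n\to f$ only uniformly, $f$ is a priori merely continuous, and it is the recovered representation $f(t)=x_0+\int_0^t\phi(s)\,ds$ with $\phi\in L^p$ that places $f$ back in $BV_p(R^d)$.
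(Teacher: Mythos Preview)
Your proof is correct and follows essentially the same route as the paper: weak compactness of $S_{L^p}(\Phi)$ to extract a subsequential weak limit $\phi$, identification of $f$ with $x_0+\int_0^\cdot\phi$, and the straightforward $L^p$-decomposability of $S_{L^p}(\Phi)$ for the second part. If anything, you are slightly more careful than the paper in treating the case $p=1$ separately via uniform integrability and Dunford--Pettis, whereas the paper's line ``closed, bounded and convex in $L^p$\dots therefore weakly compact'' tacitly assumes reflexivity.
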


\begin{proof} {\rm If $\mathcal{R}$ is an integral then for every $t\in [0,T]$ $\mathcal{R} (t)$ is a closed subset of $R^d$ by Theorem 8.6.7 of \cite{AubinFr}.
 Let $(f_n)_{n=1}^{\infty}\subset \mathcal{R} $ be a sequence convergent to some $f$ with respect to the norm $\|\cdot \|_{\infty}$. Since $\mathcal{R} $ is an integral, then $f_n(t)=x_0+\int _0^t\phi _n(s)ds$ for some $\phi _n\in S_{L^p}(\Phi)$. But $f_n(0)=x_0$ and therefore, $f(0)=x_0$.  Moreover, since $\Phi $ is $p$-integrably bounded by some function $g\in L^p([0,T])$ then $\sup_n V_p ( f_n ) \leq \|g\|_{L^p}$. It follows from Proposition \ref{Prop1}(c) that $V_p(f)\leq \|g\|_{L^p}$. Therefore, $f\in BV_p(R^d)$ and $f(t)=x_0+\int _0^t f'(s)ds$. Since $\Phi$ is $p$-integrably bounded and has closed and bounded values, then the set $S_{L^p}(\Phi)$ is closed, bounded and convex in $L^p([0,T])$. Therefore, it is weakly compact there. Thus there exists a subsequence $(\phi_{n_k})$ of $(\phi_n)$ weakly convergent to some $\phi\in S_{L^p}(\Phi)$. Let $J:L^p([0,T])\rightarrow C([0,T])$ be a linear operator defined by formula $J(\psi)=x_0+\int_0^{\cdot }\psi(s)ds$. Since $J$ is norm-to-norm continuous, then it is also weak-to-weak continuous. Thus $f_{n_k}=x_0+\int_0^{\cdot }\phi_{n_k}(s)ds$ tends weakly to  $x_0+\int_0^{\cdot }\phi(s)ds$. But $(f_{n_k})$ tends to $f=x_0+\int_0^{\cdot }f'(s)ds$ in $\|\cdot \|_{\infty}$ norm. Thus $\phi=f'$ and therefore, $f'\in S_{L^p}(\Phi)$. This implies $f\in \mathcal{R}$, which proves the closedness of $\mathcal{R}$.

Now let us take $f_1,f_2\in \mathcal{R}$. There exist a set-valued function $\Phi$ and functions $\phi _1, \phi_2 \in S_{L^p}(\Phi)$ such that $f_1(t)=x_0+\int _0^t\phi _1(s)ds$ and  $f_2(t)=x_0+\int _0^t\phi _2(s)ds$ for every $t\in [0,T]$. Let $a\in [0,T]$ be arbitrarily taken and let $\gamma (t)= {\rm 1}\hspace{-1mm}{\rm I}_{[0,a)}(s)\cdot \phi_1(s)+{\rm 1}\hspace{-1mm}{\rm I}_{[a,T]}(s)\cdot \phi_2(s)$. Then $\gamma \in S_{L^p}(\Phi)$ and therefore, $f=f_1\oplus_a f_2=x_0+\int \gamma (s)ds\in \mathcal{R}$. It means that $\mathcal{R} $ is $V_p$-decomposable.}
\end{proof}

\begin{theorem}
\label{Th2}  Let $\mathcal{R}\subset BV_p(R^d)$, $\mathcal{R} (0)=x_0$, be bounded, $V_p$-decomposable, convex and closed with respect to the norm $\|\cdot \|_{\infty}$. Then $\mathcal{R} $ is an integral. 
\end{theorem}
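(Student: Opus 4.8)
The plan is to pass, by the Riesz theorem for $BV_p$ functions, from $\mathcal{R}$ to the family of derivatives of its members, to verify that this family is a closed, convex, bounded, $L^p$-decomposable subset of $L^p([0,T],R^d)$, and then to recover the integrand $\Phi$ via the Hiai--Umegaki representation of closed $L^p$-decomposable sets recalled in Section~3. So first I would put $\Lambda:=\{f':f\in\mathcal{R}\}\subset L^p([0,T],R^d)$, which is nonempty since $\mathcal{R}(0)=x_0$. By Proposition~\ref{Prop1}(d) every $f\in\mathcal{R}\subset BV_p(R^d)$ has a strong derivative $f'\in L^p$ with $f(t)=f(0)+\int_0^tf'(s)\,ds=x_0+\int_0^tf'(s)\,ds$, while conversely $J\psi:=x_0+\int_0^{\cdot}\psi(s)\,ds$ belongs to $BV_p(R^d)$ for every $\psi\in L^p([0,T],R^d)$; hence $f\mapsto f'$ is a bijection of $\mathcal{R}$ onto $\Lambda$ with inverse $J$, and $J(\Lambda)=\mathcal{R}$. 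It therefore suffices to produce a measurable, $p$-integrably bounded $\Phi:[0,T]\to Conv(R^d)$ with $\Lambda=S_{L^p}(\Phi)$, for then $\mathcal{R}=J(\Lambda)=x_0+\int\Phi$ is an integral in the sense of Definition~\ref{Def3}.

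Next I would transfer the hypotheses to $\Lambda$. Convexity of $\Lambda$ is immediate from convexity of $\mathcal{R}$ and linearity of $f\mapsto f'$, and boundedness of $\mathcal{R}$ gives boundedness of $\Lambda$ in $L^p$ because $V_p(f)=\int_0^T\|f'(s)\|^p\,ds$. Closedness of $\Lambda$ in $L^p$ follows from the $\|\cdot\|_{\infty}$-closedness of $\mathcal{R}$: the Jensen/H\"{o}lder estimate $\|J\psi-J\chi\|_{\infty}\leq T^{1-1/p}\|\psi-\chi\|_{L^p}$ makes $J$ continuous into $(C([0,T],R^d),\|\cdot\|_{\infty})$, so if $\psi_n\in\Lambda$ and $\psi_n\to\psi$ in $L^p$ then $J\psi_n\to J\psi$ in $\|\cdot\|_{\infty}$, whence $J\psi\in\mathcal{R}$ and $\psi=(J\psi)'\in\Lambda$. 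The $L^p$-decomposability of $\Lambda$ is more delicate: $V_p$-decomposability of $\mathcal{R}$ only gives, via $(f_1\oplus_a f_2)'={\rm 1}\hspace{-1mm}{\rm I}_{[0,a)}f_1'+{\rm 1}\hspace{-1mm}{\rm I}_{[a,T]}f_2'$, closure of $\Lambda$ under switching at a single point; iterating this over a finite partition shows $\sum_{i=1}^m{\rm 1}\hspace{-1mm}{\rm I}_{[t_{i-1},t_i)}\psi^{(i)}\in\Lambda$ for all partitions $0=t_0<\dots<t_m=T$ and $\psi^{(i)}\in\Lambda$, hence ${\rm 1}\hspace{-1mm}{\rm I}_B\psi_1+{\rm 1}\hspace{-1mm}{\rm I}_{B^{\sim}}\psi_2\in\Lambda$ for every finite union of intervals $B$; finally, approximating an arbitrary measurable $A\subset[0,T]$ by such $B_n$ with the Lebesgue measure of $A\triangle B_n$ tending to $0$ and using $\|({\rm 1}\hspace{-1mm}{\rm I}_{B_n}\psi_1+{\rm 1}\hspace{-1mm}{\rm I}_{B_n^{\sim}}\psi_2)-({\rm 1}\hspace{-1mm}{\rm I}_A\psi_1+{\rm 1}\hspace{-1mm}{\rm I}_{A^{\sim}}\psi_2)\|_{L^p}^p=\int_{A\triangle B_n}\|\psi_1-\psi_2\|^p\to0$ together with the $L^p$-closedness just proved yields ${\rm 1}\hspace{-1mm}{\rm I}_A\psi_1+{\rm 1}\hspace{-1mm}{\rm I}_{A^{\sim}}\psi_2\in\Lambda$. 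Thus $\Lambda$ is closed, convex, bounded and $L^p$-decomposable.

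Then I would invoke the Hiai--Umegaki theorem (\cite{Hiai}): there is a measurable $\Phi:[0,T]\to R^d$ with nonempty closed values, $\Lambda=S_{L^p}(\Phi)$, and a sequence $(\phi_n)\subset\Lambda$ with $\Phi(t)=\overline{\{\phi_n(t)\}}$ for a.e.\ $t$; convexity of $\Lambda$ forces $\Phi(t)$ convex a.e. To see that $\Phi$ is $p$-integrably bounded, observe $\|\Phi(t)\|_X=\sup_n\|\phi_n(t)\|$ a.e., and for each $N$ use $L^p$-decomposability to glue $\phi_1,\dots,\phi_N$ over the disjoint sets on which each one successively realises the running maximum, obtaining $\psi_N\in\Lambda$ with $\|\psi_N(t)\|=\max_{n\leq N}\|\phi_n(t)\|$ a.e.; since $\Lambda$ is $L^p$-bounded, $\int_0^T\max_{n\leq N}\|\phi_n\|^p=\|\psi_N\|_{L^p}^p$ stays bounded, so monotone convergence as $N\to\infty$ gives $\int_0^T\|\Phi(t)\|_X^p\,dt<\infty$. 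Hence $\Phi$ has a.e.\ compact convex values, and after redefining it on a null set we obtain $\Phi:[0,T]\to Conv(R^d)$ measurable and $p$-integrably bounded with $\mathcal{R}=x_0+\int\Phi$.

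The main obstacles I anticipate are the upgrade of the ``one-point switching'' coming from $V_p$-decomposability to genuine $L^p$-decomposability, and the extraction of $p$-integrable boundedness of the abstract integrand $\Phi$ produced by Hiai--Umegaki; both are handled by playing the $L^p$-closedness of $\Lambda$ (a consequence of the $\|\cdot\|_{\infty}$-closedness of $\mathcal{R}$) against the gluing that decomposability permits. Moreover, if ``bounded'' in the statement is understood in the $\|\cdot\|_{\infty}$-norm rather than in $\|\cdot\|_{V_p}$, then the $L^p$-boundedness of $\Lambda$ used above is no longer automatic and must itself be squeezed out of the $\|\cdot\|_{\infty}$-closedness of $\mathcal{R}$.
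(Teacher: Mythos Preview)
Your proposal is correct and follows essentially the same strategy as the paper: pass to the set of derivatives $\Lambda=\{f':f\in\mathcal{R}\}$ (the paper writes $M$), verify it is closed, convex, bounded and $L^p$-decomposable in $L^p([0,T],R^d)$, and then invoke the Hiai--Umegaki representation theorem to recover the integrand $\Phi$. The only visible difference is in the step upgrading from interval-decomposability to full $L^p$-decomposability: the paper uses a monotone class argument, while you approximate an arbitrary measurable set by finite unions of intervals and pass to the $L^p$-limit via absolute continuity of the integral---both are standard, and your route is if anything a bit cleaner; your explicit gluing argument for $p$-integrable boundedness of $\Phi$ also supplies detail that the paper leaves implicit.
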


\begin{proof} {\rm Assume that $\mathcal{R}\subset BV_p(R^d)$, let $f_1,f_2\in \mathcal{R}$ and $a\in [0,T]$ be arbitrarily taken. If $f=f_1\oplus_a f_2$, then $f\in \mathcal{R}$ by the assumption of $V_p$-decomposability. We define the set $M$ by the formula
\[M=\{\phi \in L^p([0,T]):\; x_0+\int \phi (s)ds\in \mathcal{R}\}.\]
Then $M$ is convex in $L^p([0,T])$. It is bounded and closed in $L^p([0,T])$ by Proposition \ref{Prop1}(d).

Since $\mathcal{R} (0)=x_0$, then $\mathcal{R} =\{f_{\alpha}:\; f_{\alpha}=x_0+\int f_{\alpha}'(s)ds;\;f_{\alpha}'\in M\}$. We will show that the set $M$ is $L^p$-decomposable in $L^p([0,T],\beta([0,T]),\lambda )$, i.e., we will show that for every set $A\in \beta([0,T])$ and any $\phi, \psi \in M$, the function $\gamma={\rm 1}\hspace{-1mm}{\rm I}_{A}\cdot \phi+{\rm 1}\hspace{-1mm}{\rm I}_{ A^\sim}\cdot \psi$ belongs to the set $M$. $\beta ([0,T]) $, as usual, denotes here the Borel $\sigma$ algebra of subsets of the interval $[0,T]$ and $\lambda $ is a Lebesgue measure. 

We take a partition $\Pi _n: 0=t_0<t_1<...<t_{2n}<t_{2n+1}=T$ and the set $A$ of the form $A=\bigcup _{i=0}^{n}[t_{2i},t_{2i+1})$. Since $\mathcal{R} $ is $V_p$-decomposable it is easy to see that taking any $f_1,...,f_{2n+1}\in \mathcal{R}$ a function $f$ given by the formula $f(t)=x_0+\int _0^t\sum_{i=0}^{2n}{\rm 1}\hspace{-1mm}{\rm I} _{[t_i,t_{i+1})}(s)\cdot f_{i+1}'(s)ds$ belongs to $\mathcal{R}$. Therefore, taking $f_{2i}'=\phi$ for $i=1,2,...n$ and $f_{2i+1}'=\psi$ for $i=0,1,...n$, we have
\[x_0+\int \gamma (s)ds=x_0+\int \big({\rm 1}\hspace{-1mm}{\rm I}_{A}(s)\cdot \phi(s)+{\rm 1}\hspace{-1mm}{\rm I}_{ A^\sim}(s)\cdot \psi(s)\big)ds=x_0+\int_0^tf'(s)ds\in \mathcal{R}.\]
It means that $\gamma \in M$.

Let $\mathcal{M}=\bigcup _{n=1}^{\infty}\bigcup _{\Pi_n}\{B\subset [0,T]:\; B=\bigcup _{i=0}^{n-1}[t_{2i},t_{2i+1})\}$. Then $\mathcal{M} $ is a ring generating a $\sigma$-algebra $\beta ([0,T])$. We will show that $\mathcal{M}$ is a monotone class also. To this end, assume that  $(A_i)_{i=1}^{\infty} \subset \mathcal{M}$ and $A_i\subset A_{i+1}$. We prove that the set $A=\bigcup _{i=1}^{\infty}A_i$ belongs to $\mathcal{M}$. We can find an infinite partition $\Pi _{\infty}:0=t_0<t_1<t_2<...$ of $[0,T]$ and taking $\tilde{A_k}=\bigcup _{i=0}^k[t_{2i},t_{2i+1})$ we get $\tilde{A_k}\subset \tilde{A}_{k+1}\subset A$ and $A=\lim _{k\rightarrow \infty}\tilde{A_k}=\bigcup _{k=1}^{\infty}\tilde{A_k}$. Therefore, ${\rm 1}\hspace{-1mm}{\rm I} _{A}(s)=\lim _{k\rightarrow \infty}{\rm 1}\hspace{-1mm}{\rm I} _{\tilde{A_k}}(s)$ for every $s\in [0,T]$. Since the sets ${(\tilde{A_k})}^\sim$ form a decreasing family, then a sequence $({\rm 1}\hspace{-1mm}{\rm I} _{{(\tilde{A_k})}^\sim}(s))$ is a decreasing sequence of functions convergent to ${\rm 1}\hspace{-1mm}{\rm I} _{A^\sim }(s), $ where $A^\sim =\bigcap _{k=1}^{\infty}(\tilde{A_k})^\sim$.

Let us take any $\phi, \psi \in M$, $A=\bigcup _{i=0}^{\infty}[t_{2i},t_{2i+1})$ and let $\gamma (s)={\rm 1}\hspace{-1mm}{\rm I}_{A}(s)\cdot \phi(s)+{\rm 1}\hspace{-1mm}{\rm I}_{ A^\sim}(s)\cdot \psi(s)$. Then $\gamma (s)=\lim _{k\rightarrow \infty}\gamma _k(s)$, where $\gamma _k(s)={\rm 1}\hspace{-1mm}{\rm I}_{\tilde{A_k}}(s)\cdot \phi(s)+{\rm 1}\hspace{-1mm}{\rm I}_{{(\tilde{A_k})}^\sim}(s)\cdot \psi(s)$. It was shown in the first part of the proof that $\gamma _k(s)\in M$, because of $x_0+\int \gamma _k(s)ds\in \mathcal{R}.$ We show that $\gamma \in M$, i.e., that $f=x_0+\int \gamma (s)ds\in \mathcal{R}.$ We know that $f_k=x_0+\int \gamma _k(s)ds\in \mathcal{R}$. We have
\[\|f_k-f\|_{\infty}=\sup _{t\in[0,T]}\|\int _0^t (\gamma _k(s)-\gamma (s))ds\|\leq \int _0^T \|\gamma _k(s)-\gamma (s)\|ds.\]
However, $\gamma _k(s)\rightarrow\gamma (s)$ a.e. and the sequence $\|\gamma _k(s)-\gamma (s)\|$ admits a $p$-integrable majorant $2|\phi(s)|+2|\psi(s)|$. Therefore,  $\|f_k-f\|_{\infty}\rightarrow 0$. Since $\mathcal{R}$ is closed by the assumption, then $f\in \mathcal{R}$ and therefore, $\gamma\in M$.

We have shown that the set 
\[W=\{A\in \beta ([0,T]): {\rm 1}\hspace{-1mm}{\rm I}_{A}\cdot \phi+{\rm 1}\hspace{-1mm}{\rm I}_{ A^\sim}\cdot \psi \in M \hbox{ if }\; \phi, \psi \in M\}\]
contains a ring generating $\beta ([0,T])$ and a monotone class 

\[\Lambda = \bigcup _{\Pi _{\infty}}\{A, A^\sim \subset [0,T]:\;A=\bigcup _{i=0}^{\infty}[t_{2i},t_{2i+1}) \}.\] 
From the monotone class theorem we deduce that for every $\phi, \psi \in M$ and every set $Q\in \beta ([0,T])$ the set ${\rm 1}\hspace{-1mm}{\rm I} _Q\phi+{\rm 1}\hspace{-1mm}{\rm I} _{Q^\sim} \psi$ belongs to $M$. Therefore, $M$ is $L^p$-decomposable and by Theorem 3.1 of \cite{Hiai} there exists a measurable set-valued function $\Phi:[0,T]\rightarrow Cl(R^d)$ such that $S_{L^p}(\Phi)=M=\{\phi \in L^p([0,T]):\; x_0+\int \phi(s)ds\}\in \mathcal{R} $. It means that $\mathcal{R} =x_0+\int \Phi(s)ds$. Since $S_{L^p}(\Phi)=M$ is convex, then $\Phi$ has convex values by Theorem 1.5 from \cite{Hiai}.  Moreover, $\Phi$ is $p$-integrably bounded by the boundedness of $M$. Therefore, $\mathcal{R}$ should be an integral.}
\end{proof}

\begin{definition} Let $X$ be a real normed linear space. Let $A,B\in $ Conv$(X).$
The set $C\in $ Conv$(X)$ is said to be {\it the Hukuhara difference} $A\div B$ if $A=B+C.$ Consider a set-valued mapping $G:R^{1}\rightarrow $ Conv$(X).$ We say that $G$ admits {\it a Hukuhara differential} at $t_0\in R^{1}$, if there exists a set $D_{H}G(t_0)\in $ Conv$(X)$ and such that the limits
\[\lim_{\Delta t\rightarrow 0+}\frac{G(t_{0}+\Delta t)\div G(t_{0})}{\Delta t} \]
and
\[\lim_{\Delta t\rightarrow 0+}\frac{G(t_{0})\div G(t_{0}-\Delta t)}{\Delta t} \]
exist and are equal to the set $D_{H}G(t_{0})$. 
\end{definition}
For a detailed discussion of the properties and applications of the Hukuhara differentiable
multifunctions we refer the reader to \cite{Laksh5}.

\smallskip

Now we are ready to prove the main decomposability results of the section.

\begin{theorem}
\label{Th3} If a closed and bounded set $\mathcal{R} \subset BV_p(R^d)$ with $\mathcal{R}(0)=x_0$ is $V_p$-decomposable, then there exists a measurable and $p$-integrably bounded set-valued function $\Phi :[0,T]\rightarrow Comp(R^d)$ such that the set-valued function $t\rightarrow \mathcal{R}(t)$ is Hukuhara differentiable for almost every $t\in [0,T]$ and $D_H\mathcal{R}(t)= \overline{co}\Phi(t)$.
\end{theorem}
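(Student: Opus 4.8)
The plan is to reduce the $V_p$-decomposability of $\mathcal{R}$ to ordinary $L^p$-decomposability of its set of derivatives, invoke the Hiai--Umegaki representation theorem, and then differentiate the resulting Aumann integral. Since $\mathcal{R}(0)=x_0$ and $R^d$ is reflexive, Proposition \ref{Prop1}(d) shows that every $f\in\mathcal{R}$ has the form $f(\cdot)=x_0+\int_0^{\cdot}f'(s)\,ds$ with $f'\in L^p([0,T],R^d)$ and $V_p(f,[a,b])=\int_a^b\|f'(s)\|^p\,ds$; hence $f\mapsto f'$ is a bijection between $\mathcal{R}$ and
\[M:=\Big\{\phi\in L^p([0,T],R^d):\;x_0+\textstyle\int_0^{\cdot}\phi(s)\,ds\in\mathcal{R}\Big\}.\]
Boundedness of $\mathcal{R}$ in $\|\cdot\|_{V_p}$ makes $M$ bounded in $L^p$, and closedness of $\mathcal{R}$ in $\|\cdot\|_{\infty}$ together with the Jensen--H\"older bound $\|\int_0^{\cdot}(\phi_n-\phi)\,ds\|_{\infty}\le T^{1-1/p}\|\phi_n-\phi\|_{L^p}$ and Proposition \ref{Prop1}(d) makes $M$ closed in $L^p$. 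The key observation is that the monotone-class argument from the proof of Theorem \ref{Th2} uses only the $V_p$-decomposability of $\mathcal{R}$ (its convexity entered there solely at the end, to force convex values of $\Phi$); running that argument verbatim shows that $M$ is $L^p$-decomposable.

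By the Hiai--Umegaki theorem (Theorem 3.1 of \cite{Hiai}) there is a measurable set-valued function $\Phi:[0,T]\to Cl(R^d)$ with $S_{L^p}(\Phi)=M$, and boundedness of $M$ forces $\Phi$ to be $p$-integrably bounded (apply a Castaing representation of $\Phi$ and monotone convergence), so after a null-set modification $\Phi$ takes values in $Comp(R^d)$. Because $[0,T]$ has finite measure, $S_{L^1}(\Phi)=S_{L^p}(\Phi)=M$, whence for each $t$
\[\mathcal{R}(t)=\Big\{x_0+\textstyle\int_0^t\phi(s)\,ds:\;\phi\in S_{L^1}(\Phi)\Big\}=x_0+\int_0^t\Phi(s)\,ds,\]
the Aumann integral, which by Aumann's convexity theorem \cite{Aumann} equals $x_0+\int_0^t\overline{co}\,\Phi(s)\,ds\in Conv(R^d)$. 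Set $\Psi:=\overline{co}\,\Phi$, a measurable, $p$-integrably bounded, $Conv(R^d)$-valued function.

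Now use additivity of the Aumann integral over adjacent subintervals: for $0\le t<t+h\le T$ one has $\mathcal{R}(t+h)=\mathcal{R}(t)+\int_t^{t+h}\Psi(s)\,ds$ and $\mathcal{R}(t)=\mathcal{R}(t-h)+\int_{t-h}^{t}\Psi(s)\,ds$, and since $\mathcal{R}(t)\in Conv(R^d)$ the cancellation law makes the one-sided Hukuhara differences well defined and equal to $\int_t^{t+h}\Psi(s)\,ds$ and $\int_{t-h}^{t}\Psi(s)\,ds$. Embedding $Conv(R^d)$ isometrically into $C(S^{d-1})$ via support functions turns $\Psi$ into a Bochner-integrable map, so the Lebesgue differentiation theorem for Banach-space-valued integrable functions gives, for a.e.\ $t$,
\[\frac1h\int_t^{t+h}H_{R^d}\big(\Psi(s),\Psi(t)\big)\,ds\to0\quad\hbox{and}\quad\frac1h\int_{t-h}^{t}H_{R^d}\big(\Psi(s),\Psi(t)\big)\,ds\to0\]
as $h\to0+$. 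Since $H_{R^d}\big(\frac1h\int_I\Psi(s)\,ds,\Psi(t)\big)\le\frac1h\int_I H_{R^d}(\Psi(s),\Psi(t))\,ds$ for $I=[t,t+h]$ and for $I=[t-h,t]$, both difference quotients converge in $H_{R^d}$ to $\Psi(t)=\overline{co}\,\Phi(t)$; hence $t\mapsto\mathcal{R}(t)$ is Hukuhara differentiable for a.e.\ $t\in[0,T]$ with $D_H\mathcal{R}(t)=\overline{co}\,\Phi(t)$.

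I expect the only non-routine points to be the two imported facts: that $L^p$-boundedness of the decomposable set $M$ forces $p$-integrable boundedness of $\Phi$, and the set-valued (equivalently Banach-valued) Lebesgue differentiation theorem delivering a.e.\ Hausdorff-metric convergence of the averaged integrals. Everything else is bookkeeping built on Proposition \ref{Prop1}, the proof of Theorem \ref{Th2}, and standard Aumann-integral calculus.
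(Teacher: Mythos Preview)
Your proposal is correct and follows essentially the same route as the paper: pass from $V_p$-decomposability of $\mathcal{R}$ to $L^p$-decomposability of the derivative set $M$ via the monotone-class argument of Theorem~\ref{Th2} (the paper simply invokes Theorem~\ref{Th2}, and you correctly observe that the convexity hypothesis there is only used to force $\Phi$ to be convex-valued, not for the existence of $\Phi$), represent $\mathcal{R}(t)$ as an Aumann integral $x_0+\int_0^t\Phi(s)\,ds$, and then differentiate. The one genuine difference is in the final step: the paper disposes of the Hukuhara differentiability of $t\mapsto x_0+\int_0^t\Phi(s)\,ds$ by citing \cite{Tolst}, whereas you give a self-contained argument through the R{\aa}dstr\"om/support-function embedding into $C(S^{d-1})$ and the Banach-valued Lebesgue differentiation theorem---this is a legitimate and standard way to recover exactly the result the paper quotes. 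One small wording issue: you write ``closedness of $\mathcal{R}$ in $\|\cdot\|_\infty$'', but the hypothesis only gives closedness in $\|\cdot\|_{V_p}$; this is harmless, since $\phi_n\to\phi$ in $L^p$ forces $x_0+\int_0^{\cdot}\phi_n\to x_0+\int_0^{\cdot}\phi$ in $\|\cdot\|_{V_p}$ by Proposition~\ref{Prop1}(d), so $BV_p$-closedness already makes $M$ closed in $L^p$ and suffices for the monotone-class step (the paper makes the same assertion without comment).
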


\begin{proof} {\rm Assume that a closed and bounded set $\mathcal{R}$ in $BV_p(R^d)$ is $V_p$-decompo-sable. It is also closed with respect to $\|\cdot \|_{\infty}$. Therefore, it follows by Theorem \ref{Th2} that $\mathcal{R}$ is an integral, i.e., there exists a measurable and a $p$-integrably bounded set-valued function $\Phi :[0,T]\rightarrow Comp(R^d)$ such that 
\[\mathcal{R} = x_0+\int \Phi(s)ds= \{f\in BV_p(R^d): f(\cdot )=x_0+\int _0^{\cdot }\phi(s)ds,\;\phi\in S_{L^p}(\Phi)\}.\]
Since $\mathcal{R}(0)=x_0$, then $\mathcal{R}(t)$ is an Aumann integral, $\mathcal{R}(t)=x_0+\int_0^t \Phi(s)ds= \{ f(t)=x_0+\int_0^t \phi(s)ds,\;\phi\in S_{L^p}(\Phi)\}$. From this we deduce that the Hukuhara derivative  $D_H(\mathcal{R}(t))$ exists for almost every $t\in [0,T]$ and $D_H\mathcal{R}(t)= \overline{co}\Phi(t)$, see e.g., \cite{Tolst}.}
\end{proof}

\begin{remark}
\label{Rem2}  {\rm If a set $\mathcal{R} \subset BV_p(R^d)$ is an integral, then  $\mathcal{R}(t)=x_0+\int_0^t \Phi(s)ds$ for every $t\in[0,T]$ and some measurable and $p$-integrably bounded set-valued function $\Phi$. The reverse implication need not hold as the following example shows.}
\end{remark}

\begin{example} {\rm Let $\Phi:\;[0,1]\rightarrow R^1$ be a constant set-valued function $\Phi (t)\equiv[0,1]$. Let $\mathcal{R}(t)=\int _0^t\Phi(s)ds=[0,t]$. Then $\mathcal{R}(\cdot )$ is Hukuhara differentiable with $D_H(\mathcal{R(\cdot )})(t)=\Phi(t)$. We will show that $\mathcal{R}=S_{V_p}(\mathcal{R}(\cdot))$ is not an integral. Let us take $f_1(t)\equiv 0$ and 

\[\begin{array}{l} \;f_2(t)= \left\{ \begin{array}{cl} t &{\rm ~ for}
\;\;\;0\leq t< 1/2 \\
-t+1 & {\rm ~ for}\;\;\;1/2\leq t\leq 1 \\
\end{array}
\right. 
\end{array}.
\]
Of course, $f_1, f_2\in S_{V_p}(\mathcal{R}(\cdot))$. However,
\[\begin{array}{l} \;f(t)=(f_1\oplus_{1/2} f_2)(t)= \left\{ \begin{array}{cl} 0 &{\rm ~ for}
\;\;\;0\leq t< 1/2 \\
-t+1/2 & {\rm ~ for}\;\;\;1/2\leq t\leq 1 \\
\end{array}
\right.     
\end{array}.
\]
Then $f(t)\notin \mathcal{R}(t)$ for $t\in [1/2,1]$, and therefore, $f=(f_1\oplus_{1/2} f_2)\!\notin \!S_{V_p}(\mathcal{R}(\cdot))=\mathcal{R}$.  It means that $\mathcal{R}$ is not an integral.}\label{Ex2}
\end{example}

\begin{theorem}
\label{Th4}  Let $F:[0,T]\rightarrow Conv(R^d)$ be a Hukuhara differentiable set-valued function, $F\in BV_p(Conv(R^d))$, $F(0)=x_0$. Then the set 
\begin{equation}
\mathcal{IS}(F) = \{f\in BV_p(R^d):\;f\in S_{V_p}(F) \hbox{ and }\;f'\in S_{L^p}(D_H(F))\} \label{IS}
\end{equation} 
is $V_p$-decomposable and therefore, it is an integral.
\end{theorem}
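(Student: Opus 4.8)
The plan is to verify $V_p$-decomposability of $\mathcal{IS}(F)$ directly from the operation $\oplus_a$, and then to obtain ``therefore it is an integral'' by checking the remaining hypotheses of Theorem~\ref{Th2}. The one analytic input doing all the work is a fundamental theorem of calculus for $F$. Since $F\in BV_p(Conv(R^d))$, for every $x\in R^d$ with $\|x\|\le 1$ the scalar support function $t\mapsto s(F(t),x):=\sup_{y\in F(t)}\langle x,y\rangle$ satisfies $|s(F(t),x)-s(F(s),x)|\le H_X(F(t),F(s))$, hence $V_p\big(s(F(\cdot),x)\big)\le V_p(F)<\infty$; and Hukuhara differentiability of $F$ forces $s(F(\cdot),x)$ to be differentiable with derivative $s\big(D_H(F)(t),x\big)$, because the Hukuhara difference is additive on support functions and $x\mapsto s(\cdot,x)$ is $H_X$-continuous. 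Applying Proposition~\ref{Prop1}(d) to $s(F(\cdot),x)\in BV_p(R^1)$ gives
\[
s(F(t),x)=s(F(a),x)+\int_a^t s\big(D_H(F)(s),x\big)\,ds,\qquad \int_0^T\big|s(D_H(F)(s),x)\big|^p\,ds\le V_p(F),
\]
for all $0\le a\le t\le T$. Bounding $\|D_H(F)(s)\|$ by the values of $s(D_H(F)(s),\cdot)$ at the coordinate vectors $\pm e_j$ shows that $D_H(F)$ is measurable and $p$-integrably bounded, and since support functions determine compact convex sets (the support function of a Minkowski sum is the sum, and the support function of an Aumann integral is the integral of support functions) the first relation reads $F(t)=F(a)+\int_a^t D_H(F)(s)\,ds$; this is of course the classical Hukuhara FTC, cf.\ \cite{Laksh5},\cite{Tolst}.

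For $V_p$-decomposability, fix $f_1,f_2\in\mathcal{IS}(F)$ and $a\in[0,T]$, and set $f=f_1\oplus_a f_2$. Its a.e.\ derivative is $f'=f_1'$ on $[0,a)$ and $f'=f_2'$ on $[a,T]$, which lies in $S_{L^p}(D_H(F))$ by the $L^p$-decomposability of that set; hence $f'\in L^p([0,T])$ and $f\in BV_p(R^d)$ by Proposition~\ref{Prop1}(d), which is the second requirement in the definition of $\mathcal{IS}(F)$. For the first requirement $f(t)\in F(t)$ I use the piecewise representation of $f_1\oplus_a f_2$ from Remark~\ref{Rem1}: for $t\in[0,a)$ one has $f(t)=f_1(t)\in F(t)$, while for $t\in[a,T]$ one has $f(t)=f_1(a)+\big(f_2(t)-f_2(a)\big)$, where $f_1(a)\in F(a)$ and, since $f_2'(s)\in D_H(F)(s)$ a.e.,
\[
f_2(t)-f_2(a)=\int_a^t f_2'(s)\,ds\ \in\ \int_a^t D_H(F)(s)\,ds .
\]
Adding and using $F(t)=F(a)+\int_a^t D_H(F)(s)\,ds$ yields $f(t)\in F(t)$. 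Thus $f\in\mathcal{IS}(F)$, and $\mathcal{IS}(F)$ is $V_p$-decomposable.

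To deduce that $\mathcal{IS}(F)$ is an integral I would check the remaining hypotheses of Theorem~\ref{Th2}. We have $\mathcal{IS}(F)(0)=\{x_0\}$ since $f(0)\in F(0)$. The set $\mathcal{IS}(F)$ is bounded in $BV_p(R^d)$, because every $f$ in it satisfies $\|f\|_\infty\le\|F\|_\infty$ and $V_p(f)=\int_0^T\|f'(s)\|^p\,ds\le\int_0^T\|D_H(F)(s)\|^p\,ds<\infty$ by the $p$-integrable boundedness of $D_H(F)$. It is convex, since $F(t)$ and $D_H(F)(t)$ are convex. It is closed in $\|\cdot\|_\infty$ by the weak-compactness argument of the proof of Theorem~\ref{Th1} with $\Phi$ replaced by $D_H(F)$: if $f_n\in\mathcal{IS}(F)$ and $f_n\to f$ in $\|\cdot\|_\infty$, then $f(t)\in F(t)$, $f(0)=x_0$, the derivatives $f_n'$ lie in the weakly compact set $S_{L^p}(D_H(F))$, so $f_{n_k}'\rightharpoonup\phi\in S_{L^p}(D_H(F))$ along a subsequence, and weak-to-weak continuity of $\psi\mapsto x_0+\int_0^\cdot\psi$ forces $f=x_0+\int_0^\cdot\phi$, i.e.\ $f'=\phi\in S_{L^p}(D_H(F))$ and $f\in\mathcal{IS}(F)$. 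Theorem~\ref{Th2} then gives that $\mathcal{IS}(F)$ is an integral. (Equivalently, the computations above identify $\mathcal{IS}(F)=x_0+\int D_H(F)(s)\,ds$ directly, which is an integral in the sense of Definition~\ref{Def3} since $D_H(F)$ is measurable, $p$-integrably bounded and $Conv(R^d)$-valued.)

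The main obstacle is the inclusion $f(t)\in F(t)$ on $[a,T]$: this is the only place where Hukuhara differentiability is actually used, and it rests on the integral representation $F(t)=F(a)+\int_a^t D_H(F)(s)\,ds$, whose validity here comes from $F\in BV_p$ making the support functions absolutely continuous (so that Proposition~\ref{Prop1}(d) applies to them), together with the additivity of support functions under Hukuhara differences. The only other point needing care is the $p$-integrable boundedness of $D_H(F)$, which is needed both for the Aumann integral to make sense and for the boundedness hypothesis of Theorem~\ref{Th2}; as indicated, it follows from $V_p(F)<\infty$ via Proposition~\ref{Prop1}(d) applied to the support functions.
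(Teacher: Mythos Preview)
Your argument is correct and follows essentially the same route as the paper: use $L^p$-decomposability of $S_{L^p}(D_H(F))$ together with the Hukuhara fundamental theorem $F(t)=x_0+\int_0^t D_H(F)(s)\,ds$ to get $V_p$-decomposability, then invoke Theorem~\ref{Th2}. You are more careful than the paper in two respects---you justify the Hukuhara FTC and the $p$-integrable boundedness of $D_H(F)$ via support functions and Proposition~\ref{Prop1}(d), and you explicitly verify the convexity, boundedness and $\|\cdot\|_\infty$-closedness hypotheses of Theorem~\ref{Th2}, which the paper leaves implicit---but the underlying idea is identical.
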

\begin{proof} {\rm Really, let $ f,g\in \mathcal{R}$. Then $f,g\in S_{V_p}(F)$. Therefore, $f',g'\in L^p([0,T])$ and $f',g'\in S_{L^p}(D_H(F))$. Then the function
$\gamma ={\rm 1}\hspace{-1mm}{\rm I}_{[0,a)}\cdot f'+{\rm 1}\hspace{-1mm}{\rm I}_{[a,T]}\cdot g'\in S_{L^p}(D_H(F))$ because the set $S_{L^p}(D_H(F))$ is $L^p$-decomposable. From this, we get $(f\oplus_ag)(t)=x_0+\int_0^t\gamma(s)ds\in x_0+\int_0^tD_H(F)(s)ds=F(t).$ Since $V_p((f\oplus_ag))\leq V_p(f)+V_p(g)<\infty$, then $(f\oplus_ag)\in S_{V_p}(F)$, and therefore, $(f\oplus_ag)\in \mathcal{R}$. We proved that $\mathcal{R}$ is $V_p$-decomposable and it is an integral by Theorem \ref{Th2}.}
\end{proof}

Let $C\in Conv\left( R^{d}\right) $ and let $\sigma \left( \cdot ,C\right) :R^{d}\rightarrow R^{1}$, $\sigma \left( p,C\right) =\sup_{y\in C}<p,y>$ be a support function of $C$. Let $\Sigma $ denote the unit sphere in $R^{d}$ and let $V$ denote a Lebesgue measure of a closed unit ball $B(0,1)$ in $R^{d}$, i.e., $V=\pi ^{d/2}/{\Gamma (1+\frac{d}{2})}$ with $\Gamma $ being the Euler function. Let $p_{V}$ be a normalized Lebesgue measure on $B(0,1)$, i.e., $dp_{V}=dp/V$. Let 
$$
\mathcal{M}=\{\;\mu :\mu \hbox{ is a probability measure on }B(0,1) \hbox{ having }   \label{mi} 
$$
$$
\hbox{the }C^1\!-\!\hbox{density } d\mu /dp_{V} \hbox{ with respect to measure } p_V \}.
$$
Let $\xi _{\mu }:=d\mu /dp_{V}$ and let $\nabla \xi _{\mu }$ denote the gradient of $\xi _{\mu }$. By $\omega $ we denote a Lebesgue measure on $\Sigma $. The function $St_{\mu }:Conv\left( R^{d}\right) \rightarrow R^{d}$ called a generalized Steiner center, and given by the formula
\begin{equation}
St_{\mu }(C)=V^{-1}\left( \int_{\Sigma }p\sigma \left( p,C\right) \xi _{\mu }(p)d\omega (p)-\int_{B(0,1)}\sigma \left( p,C\right) \bigtriangledown \xi _{\mu }(p)dp\right)   \label{GenSt}
\end{equation}
for every $\mu \in \mathcal{M}$,
has the following properties. 

For $A,B,C\in Conv\left( R^{d}\right)$ and $a,b\in R^{1}$
$$
St_{\mu }(C)\in C\hbox{, }St_{\mu }(aA+ bB)=aSt_{\mu }(A)+bSt_{\mu }(B), \label{NirGenSt1}
$$
\begin{equation}
\|St_{\mu }(A)-St_{\mu }(B)\|\leq  L_{\mu }\cdot H_{R^{d}}\left( A,B\right) ,
\label{NierGenSt}
\end{equation}
where $L_{\mu }= d\max_{p\in \Sigma }\xi _{\mu }(p)+\max_{p\in B(0,1)}\|\bigtriangledown \xi _{\mu }(p)\|$ (see e.g., \cite{Dentch2}).

Since the set $C^1_d=\{\xi \in C^1(B(0,1),R^+):\;\int_B\xi dp_V=1\}$ is separable then there exists a countable subset $\{\xi _n\}\subset C^1_d$ dense  in $C^1_d$ with respect to supremum norm. Let $\{\mu _n\}$ be a sequence of measures from $\mathcal{M}$ with densities $\{\xi _n\}$. It is known that every set $C\subset Conv(R^d)$ has a representation 
$$
C=\overline{\{ St_{\mu}(C)\}_{\mu \in \mathcal{M}}},
$$
where $St_{\mu}(C)$ are generalized Steiner points of $C$ given by formula (\ref{GenSt}), see also \cite{Dentch2}. Therefore, by separability of $C^1_d$, we have
$$
C=\overline{\{St_{{\mu}_n}(C)\}_{n=1}^{\infty}}.
$$
Let $F:[0,T]\rightarrow Conv(R^d)$ be a set-valued function. Then 
\begin{equation}\label{intSt}
St_{\mu}\left(\int_0^tF(s)ds\right)=\int_0^t \left( St_{{\mu}}(F(s)\right)ds
\end{equation}
for every $t\in [0,T]$ by \cite{BaiFar} and we obtain
\begin{equation}\label{Cast}
\int_0^tF(s)ds=\overline{\{ St_{\mu _n}(\int_0^tF(s)ds)\}_{n=1}^{\infty}}=\overline{ \{\int_0^t \left( St_{{\mu _n}}(F(s)\right)ds\}_{n=1}^{\infty}}.
\end{equation}
Assume that $F\in BV_p(Conv(R^d))$ is Hukuhara differentiable, $F(0)=x_0$, and consider again a set $\mathcal{IS}(F)$ defined by  $(\ref{IS})$. This set is an integral by Theorem \ref{Th4}. 
We prove the following result.
\begin{theorem} \label{Th5}
Let $F\in BV_p(Conv(R^d))$ be a Hukuhara differentiable set-valued function, $F(0)=x_0$. Then there exists a Castaing representation $\{f_n\}_{n=1}^{\infty}$ of $F$ with $f_n\in \mathcal{IS}(F)$ for every $n=1,2,...$ .
\end{theorem}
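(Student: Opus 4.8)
The plan is to realize the required Castaing representation explicitly through generalized Steiner centers. Let $\{\mu_n\}\subset\mathcal{M}$ be the sequence of measures with densities $\{\xi_n\}$ dense in $C^1_d$ introduced just before the statement, and put $f_n(t):=St_{\mu_n}(F(t))$ for $t\in[0,T]$. Then I have to verify three things: that each $f_n$ belongs to $S_{V_p}(F)$; that $f_n'\in S_{L^p}(D_H(F))$, so that $f_n\in\mathcal{IS}(F)$; and that $F(t)=\overline{\{f_n(t)\}_{n=1}^{\infty}}$ for every $t\in[0,T]$.

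First I would check membership in $S_{V_p}(F)$. The inclusion $f_n(t)=St_{\mu_n}(F(t))\in F(t)$ is the first of the recalled properties of $St_\mu$, so $f_n$ is a pointwise selection of $F$. The Lipschitz estimate (\ref{NierGenSt}) gives $\|f_n(t)-f_n(s)\|\le L_{\mu_n}\,H_{R^d}(F(t),F(s))$, hence $V_p(f_n,\Pi_m)\le L_{\mu_n}^p\,V_p(F,\Pi_m)\le L_{\mu_n}^p\,V_p(F)<\infty$ for every partition $\Pi_m$, so $f_n\in BV_p(R^d)$ and therefore $f_n\in S_{V_p}(F)$.

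Next I would identify the derivative of $f_n$. Since $R^d$ is reflexive, Proposition \ref{Prop1}(d) supplies a strong derivative $f_n'\in L^p([0,T])$ with $f_n(t)=f_n(0)+\int_0^t f_n'(s)\,ds$. To compute it, fix a point $t$ of Hukuhara differentiability of $F$ and take $\Delta t>0$ small enough that the Hukuhara difference $F(t+\Delta t)\div F(t)$ exists. From $F(t+\Delta t)=F(t)+(F(t+\Delta t)\div F(t))$ and the additivity and positive homogeneity of $St_{\mu_n}$ one gets
\[\frac{f_n(t+\Delta t)-f_n(t)}{\Delta t}=St_{\mu_n}\!\left(\frac{F(t+\Delta t)\div F(t)}{\Delta t}\right),\]
and by (\ref{NierGenSt}) together with Hukuhara differentiability this converges to $St_{\mu_n}(D_H F(t))$ as $\Delta t\to0+$; the second Hukuhara limit handles the left-hand derivative analogously. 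Thus $f_n$ is classically differentiable at almost every $t$ with $f_n'(t)=St_{\mu_n}(D_HF(t))\in D_HF(t)$, so $f_n'(t)\in D_H(F)(t)$ a.e., and since $f_n'\in L^p$ this yields $f_n'\in S_{L^p}(D_H(F))$ and hence $f_n\in\mathcal{IS}(F)$.

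Finally, applying the representation $C=\overline{\{St_{\mu_n}(C)\}_{n=1}^{\infty}}$ recalled in the excerpt pointwise with $C=F(t)$ gives $F(t)=\overline{\{f_n(t)\}_{n=1}^{\infty}}$ for every $t\in[0,T]$, so $\{f_n\}$ is a Castaing representation of $F$ whose members all lie in $\mathcal{IS}(F)$. The hard part will be the derivative identification: I must make sure that the everywhere-defined classical derivative $t\mapsto St_{\mu_n}(D_HF(t))$ is measurable (which it is, being a pointwise limit of continuous difference quotients) and coincides a.e. with the strong derivative $f_n'$ furnished by the Riesz theorem, and that it is $p$-integrable (which follows from $f_n\in BV_p(R^d)$). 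A minor further point is that the existence of $F(t+\Delta t)\div F(t)$ for small $\Delta t$ is built into the standing Hukuhara-differentiability hypothesis and should be invoked explicitly.
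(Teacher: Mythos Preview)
Your proposal is correct and uses the same core idea as the paper, namely generalized Steiner selections $f_n(t)=St_{\mu_n}(F(t))$. The only difference is in how the derivative is identified: you compute $f_n'$ directly from the Hukuhara difference quotients via additivity and positive homogeneity of $St_{\mu_n}$, whereas the paper defines $f_n$ as $x_0+\int_0^t St_{\mu_n}(D_HF(s))\,ds$ and then invokes the commutation formula (\ref{intSt}), $St_\mu(\int_0^t G(s)\,ds)=\int_0^t St_\mu(G(s))\,ds$, to identify this integral with $St_{\mu_n}(F(t))$. The paper's route makes the derivative $f_n'=St_{\mu_n}(D_HF)$ immediate from the integral definition, so the ``hard part'' you flag (matching the classical derivative with the Riesz strong derivative a.e.) never arises; your direct differentiation is valid but slightly more laborious.
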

\begin{proof} {\rm Since $F(t)=x_0+\int _0^t D_H(F)(s)ds$ then by formula (\ref{Cast}) we obtain
$$
F(t)=x_0+\int _0^t D_H(F)(s)ds=x_0+\overline{ \{\int_0^t  St_{\mu _n}(D_H(F)(s))ds\}_{n=1}^{\infty}}.
$$
It means that the sequence $\{f_n\}_{n=1}^{\infty}$ defined by the formula $f_n(t)=x_0+\int_0^t St_{{\mu _n}}(D_H(F)(s))ds$ is a Castaing representation of $F$. Moreover, $f_n'(t)= St_{{\mu _n}}(D_H(F)(t))\in D_H(F)(t)$. We have to show that $f_n\in BV_p(R^d)$ and $f_n'\in L^p([0,T])$. We know that $f_n(t)=x_0+\int_0^t St_{{\mu _n}}(D_H(F)(s))ds=x_0+St_{\mu _n}\left(F(t)\right)$
by equality (\ref{intSt}). It follows from formula (\ref{NierGenSt}) that
$$
\|f_n(t)-f_n(s)\|\leq  L_{\mu }\cdot H_{R^{d}}\left( F(t),F(s)\right),
$$
where $L_{\mu }= d\max_{p\in \Sigma }\xi _{\mu }(p)+\max_{p\in B(0,1)}\|\bigtriangledown \xi _{\mu }(p)\|$. Therefore, for every $0\leq a<b<\leq T$,
$$
V_p(f_n,[a,b])= \sup _{\Pi _m} \sum _{i=0}^{m}\frac{\|f_n(t_i)-f_n(t_{i-1})\|^p}{(t_{i}-t_{i-1})^{p-1}}
$$
$$
\leq L_{\mu}\; \sup _{\Pi _m} \sum _{i=0}^{m}\frac{\big(H_{R^d}(F(t_{i-1}),F(t_i))\big)^p}{(t_{i}-t_{i-1})^{p-1}}=V_p(F,[a,b])<\infty .
$$
Therefore, $f_n\in BV_p(R^d)$. 
\bigskip

Now, we are able to apply Corollary 3.4(a) from \cite{Chi} to deduce that $f_n'$ satisfies $\int _0^t \|f_n'(s)\|^pds<\infty $. Since $f_n'$ is a measurable selection of $D_H(F)$ then $f_n'\in S_{L^p}(D_H(F))$.  Therefore, $f_n\in \mathcal{IS}(F)$ for every $n=1,2,...$ .}
\end{proof}

\section{Set-valued Young integrals}

At the beginning of this section we recall the notion of a Young integral in a single valued case introduced by L.S. Young in \cite{Young}. For details see also \cite{Friz}. Let $f:[0,T]\rightarrow R^{d}$ and $g:[0,T]\rightarrow R^{d}$ be given functions. For the partition $\Pi _m :0=t_{0}<t_{1}<...<t_{m}=T$ of the interval $[0,T]$ we consider the Riemann sum of $f$ with respect to $g$
$$
S(f,g,\Pi _m ):=\sum\limits_{i=1}^{m}f\left( t_{i-1}\right) (g(t_{i})-g\left(t_{i-1}\right) )\hbox{.}
$$
Let $|\Pi _m |:=\max \{t_{i}-t_{i-1}:1\leq i\leq m-1\}$. Then the following version of Proposition 2.4 in \cite{FrizZhang} holds.
\begin{proposition}
\bigskip \label{prop4} Let $f\in BVar_{p}(R^{d})$ and $g\in \mathcal{C}^{\alpha }\left( R^{1}\right) $ where $1/p+\alpha >1$. Then the limit 
$$
\lim_{|\Pi _m |\rightarrow 0}S(f,g,\Pi _m )=:\int_{0}^{T}fdg  \label{Riemannlim}
$$
exists and the inequality 
\begin{equation}
\|\int_{s}^{t}fdg-f(s)(g(t)-g(s))\|\leq C(\alpha ,p)\left(Var_{p}(f)\right) ^{1/p}M_{\alpha }\left( g\right) (t-s)^{\alpha }
\label{pnier}
\end{equation}
holds for every $0\leq s<t\leq T$, where the constant $C(\alpha ,p)$ depends only on $p$ and $\alpha $.
\end{proposition}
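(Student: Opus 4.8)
The plan is to follow Young's classical coarsening argument. The first and main step is to prove the estimate (\ref{pnier}) at the level of Riemann sums: for any $[s,t]\subseteq[0,T]$ and any partition $\Pi=\{s=u_0<u_1<\dots<u_N=t\}$ I would show
\[
\Bigl\|\,S(f,g,\Pi)-f(s)\bigl(g(t)-g(s)\bigr)\,\Bigr\|\le C(\alpha,p)\bigl(Var_p(f,[s,t])\bigr)^{1/p}M_\alpha(g)(t-s)^\alpha ,
\]
with $C(\alpha,p)=\sum_{m\ge1}m^{-(1/p+\alpha)}$, which is finite precisely because $1/p+\alpha>1$. The mechanism is point removal: deleting an interior node $u_j$ changes the Riemann sum by exactly $(f(u_j)-f(u_{j-1}))(g(u_{j+1})-g(u_j))$, of norm at most $\|f(u_j)-f(u_{j-1})\|\,M_\alpha(g)(u_{j+1}-u_j)^\alpha$. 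Deleting nodes one at a time until only $\{s,t\}$ remains leaves the Riemann sum $f(s)(g(t)-g(s))$, so the left-hand side above equals the sum of the increments removed along the way.

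The heart of the matter is, at each stage, to select a node cheap to delete. If the current partition of $[s,t]$ has $k$ subintervals, set $a_j=\|f(u_{j-1})-f(u_j)\|^p$ and $b_j=u_{j+1}-u_j$ for the interior indices $j=1,\dots,k-1$; then $\sum_j a_j\le Var_p(f,[s,t])=:V$ (this sum runs over a partition of $[s,t]$) and $\sum_j b_j\le t-s$. I claim there is an index $j$ with
\[
\|f(u_{j-1})-f(u_j)\|\,(u_{j+1}-u_j)^\alpha=\bigl(a_jb_j^{p\alpha}\bigr)^{1/p}\le V^{1/p}(t-s)^\alpha(k-1)^{-(1/p+\alpha)} .
\]
Indeed, put $\theta:=1/(1+p\alpha)\in(0,1)$; then $(\min_j a_jb_j^{p\alpha})^{1/p}\le\bigl((k-1)^{-1}\sum_j(a_jb_j^{p\alpha})^\theta\bigr)^{1/(\theta p)}$, and Hölder's inequality with exponents $1/\theta$ and $1/(1-\theta)$, together with the identity $p\alpha\theta/(1-\theta)=1$, gives $\sum_j(a_jb_j^{p\alpha})^\theta\le V^\theta(t-s)^{p\alpha\theta}$; since $1/(\theta p)=1/p+\alpha$, the claim follows. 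Deleting this node therefore costs at most $M_\alpha(g)V^{1/p}(t-s)^\alpha(k-1)^{-(1/p+\alpha)}$, and summing over $k=N,N-1,\dots,2$ gives the displayed Riemann-sum estimate.

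Next I would prove that $\lim_{|\Pi|\to0}S(f,g,\Pi)$ exists by a Cauchy argument over refinements. For $\Pi'\supseteq\Pi$, partitions of $[0,T]$, one has $S(f,g,\Pi')-S(f,g,\Pi)=\sum_i\bigl(S(f,g,\Pi'|_{[t_{i-1},t_i]})-f(t_{i-1})(g(t_i)-g(t_{i-1}))\bigr)$, so the Riemann-sum estimate and the (elementary) super-additivity of $Var_p$ bound $\|S(f,g,\Pi')-S(f,g,\Pi)\|$ by $C(\alpha,p)M_\alpha(g)\sum_i(Var_p(f,[t_{i-1},t_i]))^{1/p}(t_i-t_{i-1})^\alpha$. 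Hölder's inequality with exponents $p$ and $p/(p-1)$, together with the hypothesis (equivalent to $\alpha p/(p-1)>1$), majorizes the last sum so that $\|S(f,g,\Pi')-S(f,g,\Pi)\|\le C(\alpha,p)M_\alpha(g)(Var_p(f))^{1/p}T^{(p-1)/p}|\Pi|^{\alpha+1/p-1}\to0$. Comparing two arbitrary partitions with their common refinement shows $(S(f,g,\Pi))$ is Cauchy as $|\Pi|\to0$, so $\int_0^Tfdg$ exists; the same construction on a subinterval $[s,t]$ defines $\int_s^tfdg$, and passing to the limit in the Riemann-sum estimate, with $Var_p(f,[s,t])\le Var_p(f)$, gives (\ref{pnier}).

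I expect the combinatorial--analytic lemma of the second paragraph to be the only genuine obstacle: it is the precise choice $\theta=1/(1+p\alpha)$ that produces the exponent $1/p+\alpha$ of $(k-1)$, and exactly that exponent makes $\sum_m m^{-(1/p+\alpha)}$ converge under the standing hypothesis $1/p+\alpha>1$; any cruder estimate loses the theorem. Everything else --- the point-removal identity, super-additivity of $Var_p$, and the telescoping and Cauchy bookkeeping --- is routine.
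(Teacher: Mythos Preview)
Your argument is the classical Young coarsening proof and is correct: the Hölder step with $\theta=1/(1+p\alpha)$ is exactly what produces the summable exponent $1/p+\alpha$, the point-removal identity and the telescoping are right, and the Cauchy-over-refinements bound $\|S(f,g,\Pi')-S(f,g,\Pi)\|\le C(\alpha,p)M_\alpha(g)(Var_p(f))^{1/p}T^{(p-1)/p}|\Pi|^{\alpha+1/p-1}$ follows as you indicate (with the obvious modification when $p=1$).

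As for comparison with the paper: the paper does not supply a proof of this proposition at all. It is stated there as a version of Proposition~2.4 of Friz--Zhang and simply cited. So there is nothing to compare; you have written out a complete and standard proof of a result the paper takes as known.
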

\begin{corollary}
\label{Cor1}\bigskip Let $f_{1}$, $f_{2}\in BVar_{p}(R^{d})$ and $g\in \mathcal{C}^{\alpha }\left( R^{1}\right) $ where $1/p+\alpha >1$. Then 
$$
\left\Vert \int_{0}^{\cdot }f_{1}dg-\int_{0}^{\cdot }f_{2}dg\right\Vert _{\alpha }$$
$$
\leq \left( \|f_{1}-f_{2}\|_{\infty }+C(\alpha
,p)\left( Var_{p}(f_{1}-f_{2})\right) ^{1/p}\right) M_{\alpha}\left( g\right) (1+T^{\alpha })\hbox{. }
$$
\end{corollary}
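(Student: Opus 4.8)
The plan is to reduce the claim to a single Young integral by linearity and then invoke the local estimate $(\ref{pnier})$ of Proposition \ref{prop4}. Set $h:=f_{1}-f_{2}$. Since $BVar_{p}(R^{d})$ is a normed (indeed Banach) space, as recalled in Section~2, we have $h\in BVar_{p}(R^{d})$; and since the Riemann sums $S(\cdot,g,\Pi_{m})$ are linear in the integrand, $S(f_{1},g,\Pi_{m})-S(f_{2},g,\Pi_{m})=S(h,g,\Pi_{m})$ for every partition $\Pi_{m}$. Each of the three Riemann sums converges as $|\Pi_{m}|\to 0$ by Proposition~\ref{prop4} (here $1/p+\alpha>1$), so passing to the limit gives $\int_{0}^{t}f_{1}\,dg-\int_{0}^{t}f_{2}\,dg=\int_{0}^{t}h\,dg$ for every $t\in[0,T]$. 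Hence it suffices to estimate $\left\Vert \int_{0}^{\cdot}h\,dg\right\Vert _{\alpha}=\left\Vert \int_{0}^{\cdot}h\,dg\right\Vert _{\infty}+M_{\alpha}\!\left(\int_{0}^{\cdot}h\,dg\right)$.

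For $0\le s<t\le T$ I would write $\int_{s}^{t}h\,dg=\int_{0}^{t}h\,dg-\int_{0}^{s}h\,dg$ and apply $(\ref{pnier})$ to $h$ on $[s,t]$, together with $\Vert h(s)\Vert\le\Vert h\Vert_{\infty}$ and $\Vert g(t)-g(s)\Vert\le M_{\alpha}(g)(t-s)^{\alpha}$, to obtain
\[
\left\Vert \int_{s}^{t}h\,dg\right\Vert \le\Bigl(\Vert h\Vert_{\infty}+C(\alpha,p)\bigl(Var_{p}(h)\bigr)^{1/p}\Bigr)M_{\alpha}(g)(t-s)^{\alpha}.
\]
Dividing by $(t-s)^{\alpha}$ and taking the supremum over $0\le s<t\le T$ bounds $M_{\alpha}\!\left(\int_{0}^{\cdot}h\,dg\right)$ by $\bigl(\Vert h\Vert_{\infty}+C(\alpha,p)(Var_{p}(h))^{1/p}\bigr)M_{\alpha}(g)$; choosing $s=0$ (so that $\int_{0}^{0}h\,dg=0$) and using $t^{\alpha}\le T^{\alpha}$ bounds $\left\Vert \int_{0}^{\cdot}h\,dg\right\Vert _{\infty}$ by $\bigl(\Vert h\Vert_{\infty}+C(\alpha,p)(Var_{p}(h))^{1/p}\bigr)M_{\alpha}(g)T^{\alpha}$. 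Adding the two estimates and substituting $h=f_{1}-f_{2}$ gives exactly the asserted inequality, with the factor $1+T^{\alpha}$ produced by combining the two contributions.

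No genuine difficulty arises here: the corollary is pure bookkeeping built on Proposition~\ref{prop4} and linearity of the Young integral. The only points deserving a word of care are that $f_{1}-f_{2}$ again has finite $p$-variation (immediate from $BVar_{p}$ being a normed space, so the passage to the limit defining $\int_{0}^{\cdot}h\,dg$ is legitimate) and that one should leave $Var_{p}(f_{1}-f_{2})$ untouched rather than try to expand it through $Var_{p}(f_{1})$ and $Var_{p}(f_{2})$, since $p$-variation is only subadditive in the $(\cdot)^{1/p}$ sense, not additive.
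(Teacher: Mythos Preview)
Your argument is correct and is exactly the route the paper intends: the corollary is stated immediately after Proposition~\ref{prop4} without proof, precisely because it follows from $(\ref{pnier})$ by linearity of the Young integral, the triangle inequality, and the definition of the $\|\cdot\|_{\alpha}$-norm, as you have written out. The only tacit ingredient you use is the additivity $\int_{0}^{t}h\,dg-\int_{0}^{s}h\,dg=\int_{s}^{t}h\,dg$, which is standard for Young integrals and implicit in the paper's treatment.
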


\bigskip

In the case $f\in \mathcal{C}^{\beta }(R^{d})$ and $\alpha ,\beta \in (0,1]$ with $\alpha +\beta >1$, one can express the Young integral by fractional derivatives. Namely, let
\[f_{0+}(t)=\left(f(t)-f(0+)\right) I_{\left( 0,T\right) }\left( t\right) \hbox{ and }f_{T-}(t)=\left( f(t)-f(T-)\right) I_{\left( 0,T\right) }\left( t\right). \]
The right-sided and left-sided fractional derivatives of order $0<\rho <1$ for the function $f:[0,T]\rightarrow R^{1}$ are defined by
$$
D_{0+}^{\rho }f(t)=\frac{1}{\Gamma \left( 1-\rho \right) }\left( \frac{f(t)}{t^{\rho }}+\rho \int_{0}^{t}\frac{f(t)-f(s)}{\left( t-s\right) ^{\rho +1}}ds\right)   \label{Frderiv1}
$$
and 
$$
D_{T-}^{\rho }f(t)=\frac{\left( -1\right) ^{\rho }}{\Gamma \left( 1-\rho \right) }\left( \frac{f(t)}{\left( T-t\right) ^{\rho }}+\rho \int_{t}^{T}\frac{f(t)-f(s)}{\left( s-t\right) ^{\rho +1}}ds\right) \hbox{. }
\label{Frderiv2}
$$
\bigskip

Then the following result holds, see e.g., \cite{Samko}.

\begin{proposition}
\label{Prop5}\bigskip Suppose that $g:[0,T]\rightarrow R^{1}$, $g\in \mathcal{C}^{\alpha }\left( R^{1}\right) $ and $f\in \mathcal{C}^{\beta }\left( R^{d}\right) $. Then the integral $\int_{0}^{T}fdg$ exists in the sense of Riemann and 
$$
\int_{0}^{T}fdg=\left( -1\right) ^{\rho }\int_{0}^{T}D_{0+}^{\rho
}f_{0+}(t)D_{T-}^{1-\rho }g_{T-}(t)dt+f(0)(g(T)-g(0))\hbox{ }  \label{RSint}
$$
for every $\rho \in (1-\alpha ,\beta )$. Moreover, the following version of the inequality {\rm (\ref{pnier})} 
$$
\|\int_{t_{1}}^{t_{2}}fdg-f(t_{1})(g(t_{2})-g(t_{1}))\|\leq C(\alpha ,\beta )M_{\alpha }\left( g\right) M_{\beta }(f)(t_{2}-t_{1})^{\alpha +\beta }  \label{Cnier}
$$
holds for every $0\leq t_{1}<t_{2}\leq T$, where $C(\alpha ,\beta )$ depends only on $\alpha $ and $\beta $.
\end{proposition}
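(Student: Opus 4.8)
The plan is to treat the two assertions separately: existence of the Young integral together with the estimate (\ref{Cnier}) follows quickly from Proposition \ref{prop4}, while the fractional-derivative representation is the classical Z\"{a}hle formula and rests on standard Riemann--Liouville fractional calculus.

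\textbf{Existence and the estimate (\ref{Cnier}).} First I would record the embedding $\mathcal{C}^{\beta}(R^d)\subset BVar_p(R^d)$ for $p:=1/\beta$: for $0\le a<b\le T$ and any partition $\Pi$ of $[a,b]$, since $p\beta=1$,
$$
\sum_i\|f(t_i)-f(t_{i-1})\|^p\le M_\beta(f)^p\sum_i(t_i-t_{i-1})^{p\beta}=M_\beta(f)^p(b-a),
$$
hence $\big(Var_p(f,[a,b])\big)^{1/p}\le M_\beta(f)(b-a)^{\beta}$. Because $\alpha+\beta>1$ we have $1/p+\alpha=\alpha+\beta>1$, so Proposition \ref{prop4} applies to $f$ and $g$ and gives that $\lim_{|\Pi_m|\to0}S(f,g,\Pi_m)=\int_0^Tfdg$ exists in the Riemann sense. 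Applying Proposition \ref{prop4} to $f$ and $g$ restricted to the subinterval $[t_1,t_2]$ (where the relevant $p$-variation of $f$ is $Var_p(f,[t_1,t_2])$ and the $\alpha$-H\"{o}lder seminorm of $g$ is at most $M_\alpha(g)$) and substituting the bound just obtained yields
$$
\Big\|\int_{t_1}^{t_2}fdg-f(t_1)\big(g(t_2)-g(t_1)\big)\Big\|\le C(\alpha,p)\,M_\beta(f)(t_2-t_1)^{\beta}\,M_\alpha(g)(t_2-t_1)^{\alpha},
$$
which is (\ref{Cnier}) with $C(\alpha,\beta):=C(\alpha,1/\beta)$.

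\textbf{The fractional representation.} I would first check the right-hand side is well defined for $\rho\in(1-\alpha,\beta)$. From the explicit formula for $D_{0+}^{\rho}f_{0+}$, the estimate $\|f(t)-f(s)\|\le M_\beta(f)|t-s|^{\beta}$ and $\beta-\rho>0$ give the pointwise bound $\|D_{0+}^{\rho}f_{0+}(t)\|\le c_1(\rho,\beta)M_\beta(f)\,t^{\beta-\rho}$; likewise, since $\rho>1-\alpha$, the formula for $D_{T-}^{1-\rho}g_{T-}$ and $|g(t)-g(s)|\le M_\alpha(g)|t-s|^{\alpha}$ give $|D_{T-}^{1-\rho}g_{T-}(t)|\le c_2(\rho,\alpha)M_\alpha(g)\,(T-t)^{\alpha-1+\rho}$, so the integrand is bounded on $(0,T)$ and the integral converges absolutely. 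For the identity itself, after the harmless reductions $f(0)=0$ and $g(T)=0$ (subtracting constants changes the left side by exactly $f(0)(g(T)-g(0))$ and leaves $f_{0+}$, $g_{T-}$ unchanged) it remains to prove $\int_0^Tfdg=(-1)^{\rho}\int_0^T D_{0+}^{\rho}f(t)\,D_{T-}^{1-\rho}g(t)\,dt$. For $f,g\in C^{\infty}[0,T]$ (with $f(0)=0$, $g(T)=0$) one has $\int_0^Tfdg=\int_0^Tf(t)g'(t)\,dt$, and the identity reduces to classical iterated integration by parts and Fubini's theorem, organized through the composition rule $D_{0+}^{\rho}I_{0+}^{\rho}=\mathrm{id}$, the relation $D_{T-}^{1-\rho}g=\pm I_{T-}^{\rho}g'$ (valid when $g(T)=0$) and the reciprocity $\int_0^T(I_{T-}^{\rho}\phi)\psi\,dt=\int_0^T\phi\,(I_{0+}^{\rho}\psi)\,dt$ for the Riemann--Liouville fractional integrals $I_{0+}^{\rho}$, $I_{T-}^{\rho}$, all of which are in \cite{Samko}. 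The general case follows by density: approximate $f$ by $f_n\in C^{\infty}$ and $g$ by $g_n\in C^{\infty}$ with $\sup_n\|f_n\|_{\beta}<\infty$, $\sup_n\|g_n\|_{\alpha}<\infty$ and $f_n\to f$ in $\|\cdot\|_{\beta'}$, $g_n\to g$ in $\|\cdot\|_{\alpha'}$ for some $\beta'<\beta$, $\alpha'<\alpha$ still satisfying $\alpha'+\beta>1$, $\alpha+\beta'>1$, $\rho\in(1-\alpha',\beta')$; Corollary \ref{Cor1} gives convergence of the left-hand sides, while the uniform pointwise bounds above together with dominated convergence give convergence of the right-hand sides. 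Finally, the composition properties of the fractional operators show the right-hand side is independent of $\rho\in(1-\alpha,\beta)$, so the formula is consistent.

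\textbf{Main obstacle.} The genuinely delicate point is the fractional integration-by-parts step and the passage to the limit: $D_{0+}^{\rho}$ and $D_{T-}^{1-\rho}$ are nonlocal operators that interact subtly with the endpoints $0$ and $T$ --- this is precisely why the truncated functions $f_{0+}$, $g_{T-}$ and the separate boundary term $f(0)(g(T)-g(0))$ appear --- and one must keep the H\"{o}lder seminorms of the smooth approximants under control so that both sides remain continuous along the approximation. All of this is classical (Z\"{a}hle; Samko--Kilbas--Marichev, \cite{Samko}), whereas the estimate (\ref{Cnier}) follows at once from Proposition \ref{prop4}.
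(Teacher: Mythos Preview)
The paper does not give its own proof of Proposition~\ref{Prop5}; it is stated as a known result with a reference to \cite{Samko} (``Then the following result holds, see e.g., \cite{Samko}''), so there is nothing to compare your argument against at the level of strategy. Your sketch is a correct and standard route: the existence and the estimate~(\ref{Cnier}) do follow immediately from Proposition~\ref{prop4} via the embedding $\mathcal{C}^{\beta}\subset BVar_{1/\beta}$ and the variation bound $(Var_{1/\beta}(f,[t_1,t_2]))^{\beta}\le M_\beta(f)(t_2-t_1)^{\beta}$, and the fractional representation is precisely the Z\"{a}hle formula, for which your outline (well-definedness of the fractional derivatives under the H\"{o}lder hypotheses, reduction to $f(0)=0$, $g(T)=0$, verification for smooth $f,g$ via the Riemann--Liouville calculus in \cite{Samko}, then passage to the limit by density with controlled H\"{o}lder seminorms) is exactly how one proves it in the literature.
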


\bigskip

Let us consider again a set $\mathcal{IS}(F)$ given in (\ref{IS})
$$
\mathcal{IS}(F) = \{f\in BV_p(R^d):\;f\in S_{V_p}(F) \hbox{ and }\;f'\in S_{L^p}(D_H(F))\}. 
$$ 
 
\begin{definition} {\rm We define {\it a set-valued Young integral} of Hukuhara differentiable $F\in BV_p(ConvR^n)$ with respect to a function $g\in \mathcal{C}^{\alpha }\left( R^{1}\right) $, $1/p+\alpha >1$, by the formula
$$
(\mathcal{IS})\int_{0}^{t}Fdg:=cl_{R^{d}}\left\{\int_{0}^{t}fdg:f\in \mathcal{IS}(F)\right\} \hbox{, }  \label{fpint}
$$
for every $1/p+\alpha>1$ and $g\in C^{\alpha}(R^1)$.}

\end{definition}

\bigskip

We have
$$
\| (\mathcal{IS})\int_{s}^{t}Fdg\|\leq M_{\alpha }\left( g\right) \left\Vert F\right\Vert _{\beta }\left( 1+C(\alpha ,\beta )T^{\beta }\right) (t-s)^{\alpha }\hbox{ }
\label{efpeineq}
$$
for $0\leq s\leq t\leq T$. Since $F$ and $D_H(F)$ take on convex values then the sets $SV_p(F)$ and $S_{L^p}(D_H(F))$ are convex and therefore, $\mathcal{IS}(F)$ and $(\mathcal{IS})\int_{0}^{t}Fdg$ for every $t\in [0,T]$ are convex subsets of $BV_p(R^d)$ and $R^d$, respectively.
 
\bigskip

The folllowing lemma was proved in \cite{MarJur1}.

\begin{lemma}
\label{Lemma1}Let $g\in \mathcal{C}^{\alpha }\left( R^{1}\right) $. Then, for every $\rho \in (1-\alpha ,\beta )$, there exists a positive constant $C(\rho )$ such that for every $f_{1},f_{2}\in \mathcal{C}^{\beta }\left( R^{d}\right) $, $t\in \lbrack 0,T]$ and $\theta \in (0,1]$ the inequality
$$
\| \int_{0}^{t}f_{1}dg-\int_{0}^{t}f_{2}dg\|\leq C(\rho )\left[ \|f_{1}-f_{2}\|_{\infty }+(M_{\beta }\left( f_{1}\right) +M_{\beta }\left( f_{2}\right) )\theta ^{\beta }\right] \theta ^{-\rho }
$$
$$
+\|f_{1}(0)-f_{2}(0)\||g(T)-g(0)|\hbox{.}\label{intest}   
$$
holds.
\end{lemma}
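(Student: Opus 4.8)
The plan is to reduce everything to a single function and then use the fractional-derivative representation of the Young integral from Proposition \ref{Prop5}. Put $h:=f_{1}-f_{2}$; then $h\in \mathcal{C}^{\beta}(R^{d})$ with $M_{\beta}(h)\le M_{\beta}(f_{1})+M_{\beta}(f_{2})$ and $\|h\|_{\infty}=\|f_{1}-f_{2}\|_{\infty}$, and by linearity of the Young integral $\int_{0}^{t}h\,dg=\int_{0}^{t}f_{1}\,dg-\int_{0}^{t}f_{2}\,dg$. Applying Proposition \ref{Prop5} on the interval $[0,t]$ (admissible since $\rho \in (1-\alpha ,\beta)$) gives
$$\int_{0}^{t}h\,dg=(-1)^{\rho}\int_{0}^{t}D_{0+}^{\rho}h_{0+}(s)\,D_{t-}^{1-\rho}g_{t-}(s)\,ds+h(0)\bigl(g(t)-g(0)\bigr),$$
so it remains to estimate the integral term; the boundary term equals $\|f_{1}(0)-f_{2}(0)\|\,|g(t)-g(0)|$, which is dominated by the last summand in the claimed inequality (bounding $|g(t)-g(0)|$ by the oscillation of the fixed function $g$ if one insists on the literal form $|g(T)-g(0)|$).

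For the factor coming from $g$ I would use the standard pointwise bound for the left-sided fractional derivative: since $g\in \mathcal{C}^{\alpha}(R^{1})$ and $\alpha +\rho >1$,
$$|D_{t-}^{1-\rho}g_{t-}(s)|\le \frac{M_{\alpha}(g)}{\Gamma(\rho)}\Bigl((t-s)^{\alpha +\rho -1}+(1-\rho)\!\int_{s}^{t}(u-s)^{\alpha +\rho -2}\,du\Bigr)\le c_{1}(\rho)\,M_{\alpha}(g)\,(t-s)^{\alpha +\rho -1},$$
which is bounded uniformly in $0\le s\le t\le T$ by $c_{1}(\rho)M_{\alpha}(g)T^{\alpha +\rho -1}$, the exponent $\alpha+\rho-1$ being positive. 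The core of the argument is then the estimate of $\int_{0}^{t}|D_{0+}^{\rho}h_{0+}(s)|\,ds$, and this is where the free parameter $\theta$ enters. In the singular integral $\int_{0}^{s}\frac{\|h(s)-h(u)\|}{(s-u)^{\rho +1}}\,du$ I would split the range of integration at $u=s-\theta$ (only the part meeting $(0,s)$ surviving), using $\|h(s)-h(u)\|\le M_{\beta}(h)(s-u)^{\beta}$ on the near part and $\|h(s)-h(u)\|\le 2\|h\|_{\infty}$ on the far part; this yields, uniformly in $s$,
$$\int_{0}^{s}\frac{\|h(s)-h(u)\|}{(s-u)^{\rho +1}}\,du\le \frac{M_{\beta}(h)}{\beta -\rho}\,\theta^{\beta -\rho}+\frac{2\|h\|_{\infty}}{\rho}\,\theta^{-\rho}.$$
The remaining term $\|h(s)-h(0)\|/s^{\rho}$ is treated the same way, or crudely by $2\|h\|_{\infty}s^{-\rho}$, which is integrable on $[0,T]$ and contributes at most $c\,\|h\|_{\infty}T^{1-\rho}\le c\,\|h\|_{\infty}T^{1-\rho}\theta^{-\rho}$ since $\theta \le 1$. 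Altogether $\int_{0}^{t}|D_{0+}^{\rho}h_{0+}(s)|\,ds\le c_{2}(\rho)\bigl[\|h\|_{\infty}\theta^{-\rho}+M_{\beta}(h)\theta^{\beta -\rho}\bigr]$ with $c_{2}(\rho)$ depending on $\rho ,\beta ,T$.

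Multiplying the two bounds and absorbing $M_{\alpha}(g)$, the $\Gamma$-factors and all fixed powers of $T$ into a single constant $C(\rho)$ (legitimate, since $g$ is fixed before $C(\rho)$ is chosen, and $\theta^{\beta -\rho}=\theta^{\beta}\theta^{-\rho}$) gives
$$\Bigl\|\int_{0}^{t}f_{1}\,dg-\int_{0}^{t}f_{2}\,dg\Bigr\|\le C(\rho)\bigl[\|f_{1}-f_{2}\|_{\infty}+(M_{\beta}(f_{1})+M_{\beta}(f_{2}))\theta^{\beta}\bigr]\theta^{-\rho}+\|f_{1}(0)-f_{2}(0)\|\,|g(T)-g(0)|,$$
which is the assertion. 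The one genuinely delicate step is this $\theta$-truncation of the fractional integral: the kernel must be cut at precisely this scale so that the far part produces the factor $\theta^{-\rho}$ multiplying $\|f_{1}-f_{2}\|_{\infty}$ while the near part produces $\theta^{\beta -\rho}$ multiplying $M_{\beta}(f_{i})$; everything else is routine estimation of Beta-type integrals, whose exponents $\beta -\rho -1$, $\alpha +\rho -2$ and $-\rho$ are all admissible thanks to $1-\alpha <\rho <\beta$. A parallel argument bypassing fractional calculus is also available: expand $\int_{0}^{t}h\,dg$ as the Riemann sum over a uniform partition of mesh of order $\theta$ plus the summed error controlled by the inequality in Proposition \ref{Prop5}, the Riemann sum producing the $\|h\|_{\infty}$ term and the error the $M_{\beta}(h)$ term.
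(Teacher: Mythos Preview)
The paper does not prove this lemma; it merely cites it from \cite{MarJur1}. Your argument is correct and is the natural one given that the fractional-derivative representation is already recorded in the paper as Proposition~\ref{Prop5}. The $\theta$-truncation of the singular kernel is exactly the device that separates the $\|f_{1}-f_{2}\|_{\infty}\theta^{-\rho}$ contribution from the $(M_{\beta}(f_{1})+M_{\beta}(f_{2}))\theta^{\beta-\rho}$ contribution, and the three integrability constraints you need ($\alpha+\rho-1>0$ for the $g$-factor, $\beta-\rho>0$ for the near part, $\rho<1$ for the integrability of $s^{-\rho}$) are precisely what $1-\alpha<\rho<\beta$ guarantees.

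One cosmetic remark: the boundary term your computation actually produces is $\|f_{1}(0)-f_{2}(0)\|\,|g(t)-g(0)|$, whereas the stated inequality has $|g(T)-g(0)|$. Since $|g(t)-g(0)|$ is not in general dominated by $|g(T)-g(0)|$, the literal form of the statement is presumably a slip in the source; as you observe, one can instead bound $|g(t)-g(0)|\le M_{\alpha}(g)T^{\alpha}$ and absorb this into $C(\rho)$, $g$ being fixed before the constant is chosen. Your alternative sketch via a uniform partition of mesh $\theta$ and the local error bound of Proposition~\ref{Prop5} is also a valid route to the same estimate.
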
   

Using this Lemma we are able to prove the following result. 

\begin{theorem}
\label{Th6} For every $\rho \in (1-\alpha ,\beta )$ there exists a positive constant $C(\rho )$ such that for every $\theta \in (0,1]$, $t\in \lbrack 0,T]$ and for every Hukuhara differentiable set-valued functions $F_1,\;F_2$ with bounded Hukuhara derivatives, the inequality
$$
H_{R^{d}}\left( (\mathcal{IS})\int_{0}^{t}F_{1}dg,(\mathcal{IS})\int_{0}^{t}F_2dg\right)
$$
$$
 \leq C(\rho )\left( \int_{0}^{T}H_{R^{d}}(D_H(F_1)(s),D_H(F_2)(s))ds\right. 
$$
\begin{equation}
\left. +(T+T^{1-\beta })(\sup_{t\in \lbrack 0,T]}\| D_H(F_1)(t)\| +\sup_{t\in \lbrack 0,T]}\| D_H(F_2)(t)\|)\theta ^{\beta }\right) \theta ^{-\rho }\label{fpinintineq}
\end{equation}
$$
+M_{\alpha }\left( g\right) T^{\alpha }\int_{0}^{T}H_{R^{d}}\left(D_H(F_1)(s),D_H(F_2)(s)\right)ds.  
$$
holds.
\end{theorem}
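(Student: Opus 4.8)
The plan is to bound the two one-sided Hausdorff distances and take their maximum; since the problem is symmetric in $F_{1},F_{2}$ it is enough to estimate $\overline{H}_{R^{d}}\big((\mathcal{IS})\int_{0}^{t}F_{1}dg,(\mathcal{IS})\int_{0}^{t}F_{2}dg\big)$, and I will assume, as in the rest of this part of the paper, that $F_{1}(0)=F_{2}(0)=x_{0}$. Because $(\mathcal{IS})\int_{0}^{t}F_{i}dg$ is by definition the closure in $R^{d}$ of $\{\int_{0}^{t}f\,dg:f\in\mathcal{IS}(F_{i})\}$ and the distance of a point to a set equals its distance to the closure, the task reduces to the following: given $f_{1}\in\mathcal{IS}(F_{1})$, produce $f_{2}\in\mathcal{IS}(F_{2})$ for which $\sup_{t\in[0,T]}\|\int_{0}^{t}f_{1}dg-\int_{0}^{t}f_{2}dg\|$ does not exceed the right-hand side of (\ref{fpinintineq}).

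The heart of the argument will be to construct the companion selection $f_{2}$ at the level of derivatives. Write $\phi_{1}:=f_{1}'$, so that $\phi_{1}\in S_{L^{p}}(D_{H}(F_{1}))$ and $f_{1}(\cdot)=x_{0}+\int_{0}^{\cdot}\phi_{1}(s)ds$. Since $F_{2}$ is Hukuhara differentiable with bounded derivative, $D_{H}(F_{2})$ is a measurable, $p$-integrably bounded multifunction into ${\rm Comp}(R^{d})$, and a measurable-selection theorem applied to the measurable nonempty compact-valued multifunction $s\mapsto\{y\in D_{H}(F_{2})(s):\|\phi_{1}(s)-y\|={\rm dist}(\phi_{1}(s),D_{H}(F_{2})(s))\}$ yields $\phi_{2}\in S_{L^{p}}(D_{H}(F_{2}))$ with $\|\phi_{1}(s)-\phi_{2}(s)\|\le H_{R^{d}}(D_{H}(F_{1})(s),D_{H}(F_{2})(s))$ for almost every $s$. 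Setting $f_{2}(\cdot):=x_{0}+\int_{0}^{\cdot}\phi_{2}(s)ds$, one has $f_{2}\in\mathcal{IS}(F_{2})$: by construction $f_{2}'=\phi_{2}\in S_{L^{p}}(D_{H}(F_{2}))$; boundedness of $\phi_{2}$ gives $\|f_{2}(t)-f_{2}(s)\|\le\big(\sup_{u\in[0,T]}\|D_{H}(F_{2})(u)\|\big)(t-s)$, hence $V_{p}(f_{2})\le\big(\sup_{u\in[0,T]}\|D_{H}(F_{2})(u)\|\big)^{p}T<\infty$ and $f_{2}\in BV_{p}(R^{d})$; and since $F_{2}(t)=x_{0}+\int_{0}^{t}D_{H}(F_{2})(s)ds$ in the Minkowski sense, $f_{2}(t)\in F_{2}(t)$ for every $t$, i.e.\ $f_{2}\in S_{V_{p}}(F_{2})$. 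It is essential to match $\phi_{1}$ by a selection of $D_{H}(F_{2})$ rather than to match $f_{1}$ by a selection of $F_{2}$: by Example \ref{Ex2} a function in $S_{V_{p}}(F_{2})$ need not have its derivative in $S_{L^{p}}(D_{H}(F_{2}))$, and hence need not lie in $\mathcal{IS}(F_{2})$.

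It will then remain to estimate $\|\int_{0}^{t}f_{1}dg-\int_{0}^{t}f_{2}dg\|$ using the Young-integral bounds already available. With $h:=f_{1}-f_{2}$ (so $h(0)=0$, $h'=\phi_{1}-\phi_{2}$) and $I:=\int_{0}^{T}H_{R^{d}}(D_{H}(F_{1})(s),D_{H}(F_{2})(s))ds$, the construction gives $\|f_{1}-f_{2}\|_{\infty}\le\int_{0}^{T}\|\phi_{1}-\phi_{2}\|\,ds\le I$, and also $(Var_{p}(h))^{1/p}\le Var_{1}(h)=\int_{0}^{T}\|\phi_{1}-\phi_{2}\|\,ds\le I$ and $M_{\beta}(f_{i})\le T^{1-\beta}\sup_{t\in[0,T]}\|D_{H}(F_{i})(t)\|$. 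Applying Lemma \ref{Lemma1} to $f_{1},f_{2}\in BV_{p}(R^{d})\subset\mathcal{C}^{\beta}(R^{d})$ and using $f_{1}(0)=f_{2}(0)=x_{0}$ (so that the boundary term $\|f_{1}(0)-f_{2}(0)\||g(T)-g(0)|$ vanishes), the right-hand side there is at most $C(\rho)\big(I+(T+T^{1-\beta})(\sup_{t\in[0,T]}\|D_{H}(F_{1})(t)\|+\sup_{t\in[0,T]}\|D_{H}(F_{2})(t)\|)\theta^{\beta}\big)\theta^{-\rho}$, which is the first group of terms of (\ref{fpinintineq}); the remaining term $M_{\alpha}(g)T^{\alpha}I$ is accounted for by inequality (\ref{pnier}) of Proposition \ref{prop4} applied to $h$, which (using $h(0)=0$ and $(Var_{p}(h))^{1/p}\le I$) bounds $\|\int_{0}^{t}h\,dg\|$ by $C(\alpha,p)M_{\alpha}(g)T^{\alpha}I$. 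Combining these and running the same argument with $F_{1}$ and $F_{2}$ interchanged would give (\ref{fpinintineq}).

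I expect the main obstacle to be the middle step: producing a measurable $\phi_{2}\in S_{L^{p}}(D_{H}(F_{2}))$ pointwise as close as possible to $\phi_{1}$ and then verifying that its primitive is at once a $BV_{p}$-selection of $F_{2}$ and has derivative in $S_{L^{p}}(D_{H}(F_{2}))$, so that $f_{2}\in\mathcal{IS}(F_{2})$. This is exactly where Hukuhara differentiability of $F_{2}$, the boundedness of $D_{H}(F_{2})$, and the primitive representation $F_{2}(t)=x_{0}+\int_{0}^{t}D_{H}(F_{2})(s)ds$ play an essential role; once $f_{2}$ is in hand, the remaining estimates are a routine combination of Lemma \ref{Lemma1} and Proposition \ref{prop4}.
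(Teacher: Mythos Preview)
Your approach is correct and follows essentially the same route as the paper's proof: both arguments apply Lemma~\ref{Lemma1} and reduce matters to the two estimates $\sup_{f\in\mathcal{IS}(F_i)}M_\beta(f)\le (T+T^{1-\beta})\sup_t\|D_H(F_i)(t)\|$ and $H_\infty(\mathcal{IS}(F_1),\mathcal{IS}(F_2))\le\int_0^T H_{R^d}(D_H(F_1)(s),D_H(F_2)(s))\,ds$. The only real difference is tactical: where the paper bounds the latter by invoking Theorem~2.2 of \cite{Hiai} (so that $\inf_{\phi_2}\int\|\phi_1-\phi_2\|=\int\mathrm{dist}(\phi_1,D_H(F_2))$) and then takes the abstract $\inf/\sup$, you build the companion $f_2$ explicitly by a nearest-point measurable selection of $D_H(F_2)$ and verify directly that its primitive lies in $\mathcal{IS}(F_2)$; these are equivalent moves.

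One small point of exposition: your treatment of the final term $M_\alpha(g)T^\alpha\int_0^T H_{R^d}(D_HF_1,D_HF_2)\,ds$ via Proposition~\ref{prop4} is unnecessary (and does not quite reproduce the stated term, since it carries an extra factor $C(\alpha,p)$). In the paper this term arises simply by bounding the boundary contribution in Lemma~\ref{Lemma1} as $\|f_1(0)-f_2(0)\|\,|g(T)-g(0)|\le\|f_1-f_2\|_\infty M_\alpha(g)T^\alpha$. Under your standing assumption $F_1(0)=F_2(0)=x_0$ (which the paper also uses implicitly), the boundary term vanishes, so your Lemma~\ref{Lemma1} estimate alone already gives the first group of terms, and the second, being non-negative, can be added for free.
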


\begin{proof} {\rm Let $F:[0,T]\rightarrow Conv\left( R^{d}\right) $ be Hukuhara differentiable. If the set-valued function $D_H(F)(\cdot )$ is bounded, i.e., $\sup_{t\in [0,T]}\| D_H(F)(t)\|<\infty $, then $\| F\| _{\infty }\leq T\sup_{t\in [0,T]}\| D_H(F)(t)\| $ and $V_p(F)\leq T\sup_{t\in [0,T]}\|D_H(F)(t)\|^p$ as well as $M_{\beta }(F)\leq T^{1-\beta }\sup_{t\in [0,T]}\| D_H(F)(t)\| $ for any $p\geq 1$ and $\beta \in (0,1)$. Thus $F\in BV_p( Conv( R^{d}) ) $ and $S_{V_p }(F)\neq \emptyset $.  

{\rm  We obtain by Lemma \ref{Lemma1}, for any $f_{1}\in  \mathcal{IS}(F_1)$, $f_{2}\in ,\mathcal{IS}(F_2)$, $\rho \in (1-\alpha ,\beta )$, $t\in \lbrack 0,T]$ and $\theta \in (0,1]$
$$
\| \int_{0}^{t}f_{1}dg-\int_{0}^{t}f_{2}dg\|
$$
$$
\leq C(\rho )\left[ \|f_{1}-f_{2}\|_{\infty }+(M_{\beta }\left( f_{1}\right) +M_{\beta }\left( f_{2}\right) )\theta ^{\beta }\right] \theta ^{-\rho }+\|f_{1}-f_{2}\|_{\infty }M_{\alpha }(g)T^{\alpha }.
$$
 Thus 
$$
\mathrm{dist}_{R^{d}}\left( \int_{0}^{t}f_{1}dg, (\mathcal{IS})\int_{0}^{t}F_{2}dg\right)  
$$
$$
\leq C(\rho )\!\left[ \mathrm{dist}_{\infty }\left( f_{1},\mathcal{IS}(F_2)\right) \!+\!(\!\sup_{f_{1}\in \mathcal{IS}(F_1)}\!M_{\beta }\left( f_{1}\right)\! +\!\sup_{f_{2}\in \mathcal{IS}(F_2)}\!M_{\beta }\left( f_{2}\right) )\theta ^{\beta }\right] \theta^{-\rho } 
$$
$$
+\mathrm{dist}_{\infty }\left( f_{1},\mathcal{IS}(F_2)\right) M_{\alpha }(g)T^{\alpha }\hbox{.}
$$
Hence, 
$$
\overline{H}_{R^{d}}\left((\mathcal{IS})\int_{0}^{t}F_{1}dg,(\mathcal{IS})\int_{0}^{t}F_{2}dg\right) \hbox{ }
$$
$$
 \leq C(\rho )\!\left[ H_{\infty }\left( \mathcal{IS}(F_1),\mathcal{IS}(F_2)\right)\!+\!(\!\sup_{f_{1}\in \mathcal{IS}(F_1)}\!M_{\beta }\left(f_{1}\right) \!+\!\!\sup_{f_{2}\in \mathcal{IS}(F_2)}\!M_{\beta }\left(f_{2}\right) )\theta ^{\beta }\right] \theta ^{-\rho }
$$
$$
+M_{\alpha }\left( g\right) T^{\alpha }H_{\infty }\left( \mathcal{IS}(F_1),\mathcal{IS}(F_2)\right) . 
$$

The same estimation holds for $\overline{H}_{R^{d}}\left((\mathcal{IS})\int_{0}^{t}F_{1}dg,(\mathcal{IS})\int_{0}^{t}F_{2}dg\right)$. 

Therefore,
\begin{equation}
H_{R^{d}}\left((\mathcal{IS})\int_{0}^{t}F_{1}dg,(\mathcal{IS})\int_{0}^{t}F_{2}dg\right) \label{fpinintineq1}
\end{equation}
$$
 \leq C(\rho )\!\left[ H_{\infty }\left( \mathcal{IS}(F_1),\mathcal{IS}(F_2)\right)\!+\!(\!\sup_{f_{1}\in \mathcal{IS}(F_1)}\!M_{\beta }\left(f_{1}\right) \!+\!\!\sup_{f_{2}\in \mathcal{IS}(F_2)}\!M_{\beta }\left(f_{2}\right) )\theta ^{\beta }\right] \theta ^{-\rho }
$$
$$
+M_{\alpha }\left( g\right) T^{\alpha }H_{\infty }\left( \mathcal{IS}(F_1),\mathcal{IS}(F_2)\right) . 
$$
}
Suppose that $f\in \mathcal{IS}(F)$. Then, it is expressed as the integral, i.e., $f(\cdot )=\int_{0}^{\cdot }\phi (s)ds$ for some $\phi \in S_{L^p}(D_H(F))$ and we have
\[M_{\beta }(f)=\sup_{0\leq s<t\leq T}\frac{\|f(t)-f(s)\|}{\left( t-s\right) ^{\beta }}=\sup_{0\leq s<t\leq T}\frac{\|\int_s^t\phi(\tau)d\tau\|}{\left( t-s\right) ^{\beta }}.\]
From the other side, we get by the formula (\ref{33}), the equalities
\[M_{\beta }(F)=\sup_{0\leq s<t\leq T}\frac{H_{R^d}\left( F(t),F(s)\right) }{\left( t-s\right)^{\beta }}\]
\[=\sup_{0\leq s<t\leq T}\frac{H_{R^d}\left(\int _s^t D_HF(\tau)d\tau +\int _0^s D_HF(\tau)d\tau,0+\int _0^s D_HF(\tau)d\tau\right) }{\left( t-s\right)^{\beta }}\]
\[=\sup_{0\leq s<t\leq T}\frac{H_{R^d}\left(\int _s^t D_HF(\tau)d\tau ,0 \right)}{\left( t-s\right)^{\beta }}=\sup_{0\leq s<t\leq T}\frac{\|\int _s^t D_HF(\tau)d\tau \| }{\left( t-s\right)^{\beta }}\]
Therefore, 
\[M_{\beta }(f)\leq M_{\beta }(F)\]
 
and taking in the mind the begining of the proof, we get
\begin{equation}
\sup_{f\in \mathcal{IS}(F)}\|f\| _{\beta }\leq \| F\| _{\beta }\leq ( T+T^{1-\beta }) \sup_{t\in [0,T]}\| D_H(F)(t)\| <\infty.
\label{selineq}
\end{equation}
}
{\rm Let $\phi _1\in S_{L^p}(D_H(F_1))$. Then, by Theorem 2.2 from \cite{Hiai}, we have
$$
\inf_{\phi _2\in S_{L^p}(D_H(F_2))}\sup_{t\in \lbrack 0,T]}\| \int_{0}^{t}\phi _1(s)ds-\int_{0}^{t}\phi _2(s)ds\|
$$
$$
\leq \inf_{\phi _2\in S_{L^p}(D_H(F_2))}\int_{0}^{T}\| \phi _1(s)-\phi _2(s)\|ds=\int_{0}^{T}\mathrm{dist}_{R^{d}}(\phi _1(s),D_H(F_2)(s))ds
$$
$$
\leq \int_{0}^{T}H_{R^{d}}(D_H(F_1)(s),D_H(F_2)(s))ds.
$$
Thus 
$$
\overline{H}_{\infty }\left( \mathcal{IS}(F_1),\mathcal{IS}(F_2)\right) \leq \int_{0}^{T}H_{R^{d}}(D_H(F_1)(s),D_H(F_2)(s))ds.
$$
In a similar way we get 
$$
\overline{H}_{\infty }\left( \mathcal{IS}(F_2),\mathcal{IS}(F_1)\right) \leq \int_{0}^{T}H_{R^{d}}(D_H(F_1)(s),D_H(F_2)(s))ds
$$
and finally
$$
H_{\infty }\left( \mathcal{IS}(F_1),\mathcal{IS}(F_2)\right)\hbox{ }\leq \int_{0}^{T}H_{R^{d}}(D_H(F_1)(s),D_H(F_2)(s))ds.
$$
Hence, by  the formula (\ref{fpinintineq1}) together with (\ref{selineq}), we obtain inequality (\ref{fpinintineq}).}
\end{proof}

\begin{corollary}
\label{Cor22}Let $F_{n},F$ be Hukuhara differentiable set-valued functions with bounded Hukuhara derivatives satisfying
$$
\sup_{t\in \lbrack 0,T]}H_{R^{d}}(D_H(F_n)(t),D_H(F)(t))\rightarrow 0\hbox{ as }n\rightarrow \infty .
$$
Then, 
\begin{equation}
\sup_{t\in [0,T]}H_{R^{d}}\left( (\mathcal{IS})\int_{0}^{t}F_ndg,(\mathcal{IS})\int_{0}^{t}Fdg\right) \rightarrow 0\hbox{ as }n\rightarrow \infty \hbox{.}  \label{intlim}
\end{equation}
\end{corollary}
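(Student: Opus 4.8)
The plan is to derive Corollary~\ref{Cor22} directly from Theorem~\ref{Th6} by choosing the auxiliary parameter $\theta$ appropriately as $n\to\infty$. First I would apply the inequality (\ref{fpinintineq}) with $F_1=F_n$ and $F_2=F$, for a fixed but arbitrary $\theta\in(0,1]$, obtaining for every $t\in[0,T]$
\begin{equation*}
H_{R^{d}}\!\left((\mathcal{IS})\!\int_{0}^{t}\!F_ndg,(\mathcal{IS})\!\int_{0}^{t}\!Fdg\right)\le C(\rho)\big(a_n+b_n\theta^{\beta}\big)\theta^{-\rho}+M_{\alpha}(g)T^{\alpha}a_n,
\end{equation*}
where $a_n:=\int_{0}^{T}H_{R^{d}}(D_H(F_n)(s),D_H(F)(s))\,ds$ and $b_n:=(T+T^{1-\beta})\big(\sup_{t}\|D_H(F_n)(t)\|+\sup_{t}\|D_H(F)(t)\|\big)$. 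Since the bound on the right-hand side does not depend on $t$, the same bound holds for $\sup_{t\in[0,T]}$ of the left-hand side.

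The key observation is that the hypothesis $\sup_{t}H_{R^{d}}(D_H(F_n)(t),D_H(F)(t))\to0$ implies both that $a_n\le T\sup_{t}H_{R^{d}}(D_H(F_n)(t),D_H(F)(t))\to0$ and that $\sup_{t}\|D_H(F_n)(t)\|$ is bounded uniformly in $n$ (it is at most $\sup_{t}\|D_H(F)(t)\|$ plus the vanishing sup-distance), hence $b_n\le B$ for some finite constant $B$ independent of $n$. Now, given $\varepsilon>0$, I would first fix $\theta\in(0,1]$ small enough that $C(\rho)B\theta^{\beta-\rho}<\varepsilon/2$; this is possible because $\rho<\beta$ forces $\beta-\rho>0$, so $\theta^{\beta-\rho}\to0$ as $\theta\to0^{+}$. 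With this $\theta$ now fixed, the remaining terms $C(\rho)a_n\theta^{-\rho}+M_{\alpha}(g)T^{\alpha}a_n$ are constant multiples of $a_n$ and therefore tend to $0$; choosing $N$ so that they are below $\varepsilon/2$ for $n\ge N$ yields $\sup_{t\in[0,T]}H_{R^{d}}\big((\mathcal{IS})\int_{0}^{t}F_ndg,(\mathcal{IS})\int_{0}^{t}Fdg\big)<\varepsilon$ for all $n\ge N$, which is exactly (\ref{intlim}).

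The only subtlety worth flagging is the order of quantifiers: one must choose $\theta$ \emph{before} sending $n\to\infty$, exploiting that $\theta$ is a free parameter in Theorem~\ref{Th6} rather than something tied to the sequence. I expect no real obstacle here; the one point requiring a line of justification is the uniform boundedness of $\sup_{t}\|D_H(F_n)(t)\|$, which follows immediately from the triangle inequality for $H_{R^{d}}$ together with the assumed uniform convergence, and the elementary fact that $\rho\in(1-\alpha,\beta)$ guarantees $\beta-\rho>0$ so that the $\theta^{\beta-\rho}$ term can be made arbitrarily small.
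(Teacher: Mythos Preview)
Your proposal is correct and follows essentially the same approach as the paper: apply Theorem~\ref{Th6}, observe that the integral term $a_n$ tends to zero while the coefficient $b_n$ stays bounded (via the triangle inequality for the Hausdorff distance), and then exploit $\beta-\rho>0$ to kill the remaining $\theta^{\beta-\rho}$ term by choice of $\theta$. The only cosmetic difference is that the paper phrases the last step as a $\limsup$ bounded by an arbitrarily small quantity, whereas you write out the equivalent $\varepsilon$--$N$ argument.
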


\begin{proof} {\rm Let us note that for every $n\geq 1$ we have
$$
\left\vert \sup_{t\in \lbrack 0,T]}\| D_H(F_n)(t)\|-\sup_{t\in \lbrack 0,T]}\| D_H(F)(t)\|\right\vert 
$$
$$
\leq \sup_{t\in \lbrack 0,T]}H_{R^{d}}(D_H(F_n)(t),D_H(F)(t)).
$$
Thus, 
$$
\sup_{t\in \lbrack 0,T]}\| D_H(F_n)(t)\|\rightarrow \sup_{t\in \lbrack 0,T]}\| D_H(F)(t)\|\hbox{ as }n\rightarrow \infty .
$$
Therefore, the sequence $\left( \sup_{t\in \lbrack 0,T]}\| D_H(F_n)(t)\|  \right) _{n\geq 1}$ is bounded. Hence we get
by Theorem \ref{Th6} 
$$
\limsup_{n}\left( \sup_{t\in \lbrack 0,T]}H_{R^{d}}\left( (\mathcal{IS})\int_{0}^{t}F_{n}dg,(\mathcal{IS})\int_{0}^{t}Fdg\right) \right) 
$$
$$
\leq C(\rho )(T+T^{1-\beta })\left( \sup_{n}\sup_{t\in \lbrack 0,T]}\| D_H(F_n)(t)\|+\sup_{t\in \lbrack 0,T]}\| D_H(F)(t)\|\right) \theta ^{\beta -\rho }\hbox{.}
$$
Since $\beta >\rho $ and $\theta \in (0,1]$ is arbitrarily taken, we obtain formula (\ref{intlim})}.
\end{proof}
\bigskip

It was proved in \cite{Hiai} that for measurable and $p$-integrably bounded set-valued functions $F_1,F_2:[0,T]\rightarrow Comp(R^d)$, the equality 
\begin{equation}
S_{L^p}\left(cl_{R^d}\{F_1+F_2)(\cdot )\}\right)=cl_{L^p}\{S_{L^p}(F_1)+S_{L^p}(F_2)\} 
\label{L^psel}
\end{equation}
holds. Therefore, a set-valued Aumann integral satisfies 
\[\int_0^t(F_1+F_2)dt=\int_0^tF_1dt+\int_0^tF_2dt.\]
 We will show that a set-valued Young integral is additive also.

\begin{theorem} \label{Th7} Let $F,F_1,F_2\in BV_p(Conv(R^d))$ be Hukuhara differentiable with $p$-integrably bounded Hukuhara derivatives, $1<p<\infty$. Let $g\in C^{\alpha}(R^1)$, where $1/p+\alpha>1.$ Then 
\[(\mathcal{IS})\int_{0}^{t}(F_1+F_2)dg=(\mathcal{IS})\int_{0}^{t}F_1dg+(\mathcal{IS})\int_{0}^{t}F_2dg.\] 
Moreover, if the set $F(0)$ is bounded in $R^d$, then $(\mathcal{IS})\int_0^{\cdot}Fdg$ and $(\mathcal{IS})\int_s^{t}Fdg$ are bounded sets in $C^{\alpha}(R^d)$ and in $R^d$, respectively.
\end{theorem}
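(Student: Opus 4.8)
The plan is to prove the additivity identity by establishing the two inclusions separately, and then to read off the two boundedness assertions from the scalar estimates (\ref{pnier}) and Corollary \ref{Cor1}. I will use three preliminary facts. (i) By Proposition \ref{Prop1}(d), a function $f$ lies in $\mathcal{IS}(F)$ if and only if $f(t)=f(0)+\int_0^t\phi(s)\,ds$ for some $\phi\in S_{L^p}(D_H(F))$ with $f(0)\in F(0)$: such an $f$ automatically belongs to $BV_p(R^d)$, and since $F$ is Hukuhara differentiable one has $F(t)=F(0)+\int_0^tD_H(F)(s)\,ds$ (see \cite{Laksh5}), so $f(t)\in F(t)$ for all $t$. (ii) For the Minkowski sum, $F_1+F_2$ is again Hukuhara differentiable with $D_H(F_1+F_2)=D_H(F_1)+D_H(F_2)$, this derivative is again $p$-integrably bounded, and $(F_1+F_2)(0)=F_1(0)+F_2(0)$; hence $\mathcal{IS}(F_1+F_2)$ and $(\mathcal{IS})\int_0^t(F_1+F_2)\,dg$ make sense. (iii) The scalar Young integral is additive in the integrand, $\int_0^t(h_1+h_2)\,dg=\int_0^th_1\,dg+\int_0^th_2\,dg$, obtained by letting $|\Pi_m|\to0$ in $S(h_1+h_2,g,\Pi_m)=S(h_1,g,\Pi_m)+S(h_2,g,\Pi_m)$; the relevant limits exist because $BV_p(R^d)\subset BVar_p(R^d)$ on $[0,T]$ (one has $Var_p(f)\le T^{p-1}V_p(f)$) and $1/p+\alpha>1$, cf.\ Proposition \ref{prop4}.

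For the inclusion $(\mathcal{IS})\int_0^tF_1\,dg+(\mathcal{IS})\int_0^tF_2\,dg\subseteq(\mathcal{IS})\int_0^t(F_1+F_2)\,dg$: given $f_i\in\mathcal{IS}(F_i)$, fact (i) shows $f_1+f_2\in\mathcal{IS}(F_1+F_2)$, since $(f_1+f_2)(0)\in F_1(0)+F_2(0)$ and $f_1'+f_2'$ is an $L^p$-selection of $D_H(F_1)+D_H(F_2)=D_H(F_1+F_2)$; hence by (iii), $\int_0^tf_1\,dg+\int_0^tf_2\,dg=\int_0^t(f_1+f_2)\,dg\in(\mathcal{IS})\int_0^t(F_1+F_2)\,dg$. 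The right-hand side is closed, and each set $(\mathcal{IS})\int_0^tF_i\,dg$ is even compact (it is closed and bounded in $R^d$ by the uniform bound of the last paragraph), so passing to closures gives the inclusion.

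For the reverse inclusion, take $f\in\mathcal{IS}(F_1+F_2)$; write $f(0)=x_1+x_2$ with $x_i\in F_i(0)$ and, by (\ref{L^psel}) applied to $D_H(F_1)$, $D_H(F_2)$, note that $f'\in S_{L^p}(D_H(F_1+F_2))=cl_{L^p}\{S_{L^p}(D_H(F_1))+S_{L^p}(D_H(F_2))\}$. I would pick $\phi_i^{(n)}\in S_{L^p}(D_H(F_i))$ with $\phi_1^{(n)}+\phi_2^{(n)}\to f'$ in $L^p$, set $f_i^{(n)}:=x_i+\int_0^{\cdot}\phi_i^{(n)}(s)\,ds\in\mathcal{IS}(F_i)$, and observe that $f_1^{(n)}+f_2^{(n)}-f=\int_0^{\cdot}(\phi_1^{(n)}+\phi_2^{(n)}-f')(s)\,ds$ tends to $0$ both in $\|\cdot\|_\infty$ and in $(Var_p(\cdot))^{1/p}$ (using $Var_p\le T^{p-1}V_p$ and Proposition \ref{Prop1}(d)). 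Corollary \ref{Cor1} then forces $\int_0^tf_1^{(n)}\,dg+\int_0^tf_2^{(n)}\,dg=\int_0^t(f_1^{(n)}+f_2^{(n)})\,dg\to\int_0^tf\,dg$; as $(\mathcal{IS})\int_0^tF_1\,dg+(\mathcal{IS})\int_0^tF_2\,dg$ is compact, hence closed, this limit lies in it. Taking the closure over $f$ gives $(\mathcal{IS})\int_0^t(F_1+F_2)\,dg\subseteq(\mathcal{IS})\int_0^tF_1\,dg+(\mathcal{IS})\int_0^tF_2\,dg$, completing the additivity. I expect the only genuinely delicate point to be the handling of closures: removing the $L^p$-closure in (\ref{L^psel}) via the approximation above combined with the continuity estimate of Corollary \ref{Cor1}, and removing the closure of the Minkowski sum of the two set-valued integrals by their compactness.

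For the boundedness, fix $h\in L^p([0,T])$ dominating $\|D_H(F)(\cdot)\|$ and let $f=f(0)+\int_0^{\cdot}\phi(s)\,ds\in\mathcal{IS}(F)$ with $\phi\in S_{L^p}(D_H(F))$, so $\|\phi(t)\|\le h(t)$ for a.e.\ $t$. Since $F(0)$ is bounded, $\|f(0)\|\le\|F(0)\|<\infty$; H\"{o}lder's inequality gives $\|f\|_\infty\le\|F(0)\|+T^{1-1/p}\|h\|_{L^p}$, while Proposition \ref{Prop1}(d) together with $Var_p(f)\le T^{p-1}V_p(f)$ gives $(Var_p(f))^{1/p}\le T^{1-1/p}\|h\|_{L^p}$; both bounds are uniform over $f\in\mathcal{IS}(F)$. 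Hence Corollary \ref{Cor1} (with $f_2\equiv0$) bounds $\|\int_0^{\cdot}f\,dg\|_\alpha$ by a constant depending only on $\|F(0)\|$, $\|h\|_{L^p}$, $p$, $\alpha$, $T$ and $M_\alpha(g)$, so $(\mathcal{IS})\int_0^{\cdot}F\,dg$ is a bounded subset of $C^\alpha(R^d)$; and (\ref{pnier}) together with the triangle inequality $\|\int_s^tf\,dg\|\le\|\int_s^tf\,dg-f(s)(g(t)-g(s))\|+\|f(s)\|\,|g(t)-g(s)|$ bounds $\|\int_s^tf\,dg\|$ by a constant independent of $f$ and of $0\le s\le t\le T$, so $(\mathcal{IS})\int_s^tF\,dg$ is a bounded subset of $R^d$.
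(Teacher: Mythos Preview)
Your argument is correct, and for the boundedness part it coincides with the paper's proof. For the additivity, however, you take a genuinely different route from the paper. The paper establishes the stronger set equality $\mathcal{IS}(F_1+F_2)=\mathcal{IS}(F_1)+\mathcal{IS}(F_2)$ first, and only then applies linearity of $f\mapsto\int_0^t f\,dg$. To remove the $L^p$-closure in (\ref{L^psel}) the paper exploits reflexivity of $L^p$ for $1<p<\infty$: from $\phi_1^{(n)}+\phi_2^{(n)}\to f'$ in $L^p$ with $\phi_i^{(n)}\in S_{L^p}(D_H(F_i))$, weak sequential compactness of the bounded, closed, convex sets $S_{L^p}(D_H(F_i))$ yields subsequential weak limits $\phi_i\in S_{L^p}(D_H(F_i))$ with $f'=\phi_1+\phi_2$ exactly, so $f$ itself splits as a sum of elements of $\mathcal{IS}(F_1)$ and $\mathcal{IS}(F_2)$.

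You instead work directly at the level of the Young integrals: you keep the approximating pair $f_1^{(n)},f_2^{(n)}$, use Corollary~\ref{Cor1} to pass to the limit in $\int_0^t(f_1^{(n)}+f_2^{(n)})\,dg$, and then close up using compactness of the sets $(\mathcal{IS})\int_0^t F_i\,dg$ in $R^d$. This is perfectly valid (note $F_i(0)\in Conv(R^d)$ is automatically compact, so your appeal to the boundedness part is legitimate), but it is slightly weaker as a by-product: the paper's route yields the selection-level identity $\mathcal{IS}(F_1+F_2)=\mathcal{IS}(F_1)+\mathcal{IS}(F_2)$, whereas yours only gives the equality of the integrals. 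On the other hand, your argument avoids weak-compactness considerations entirely and stays within the elementary strong-convergence framework of Corollary~\ref{Cor1}.
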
 
 
\begin{proof} {\rm We show that $\mathcal{IS}(F_1+F_2)=\mathcal{IS}(F_1)+\mathcal{IS}(F_2)$. Let us take an arbitrary $f\in \mathcal{IS}(F_1+F_2)$. Then $f\in SV_p(F_1+F_2)$, $f'\in S_{L^p}(D_H(F_1+F_2))$ and $f(\cdot )=\int_0^{\cdot}f'(s)ds.$ Since $D_H(F_i)(t)$ takes on compact and convex values in $R^d$, then $f'\in S_{L^p}(D_H(F_1+F_2))=cl_{L^p}\{S_{L^p}(D_H(F_1))+S_{L^p}(D_H(F_2))\}$ by equality (\ref{L^psel}). Therefore, there exist sequences $(\phi_n^1)\subset S_{L^p}(D_H(F_1))$ and $(\phi_n^2)\subset S_{L^p}(D_H(F_2))$ such that $\phi_n^1+\phi_n^2\rightarrow f'$ with respect to the $L^p$-norm convergence. But $S_{L^p}(D_H(F_1))$ is a closed, convex and bounded subset of $L^p$ by \cite{Hiai}, and therefore, weakly compact.  Then there exists a subsequence $(\phi_{n_k}^1)$  weakly convergent to some $\phi ^1\in S_{L^p}(D_H(F_1))$. Similarly,  passing to the subsequence if needed, $(\phi_{n_k}^2)$ tends weakly to some  $\phi ^2\in S_{L^p}(D_H(F_2))$. Therefore,  $f'=\phi^1+\phi^2\in S_{L^p}(D_H(F_1))+S_{L^p}(D_H(F_2))$. But $f(\cdot )=\int _0^{\cdot}f'(s)ds=\int _0^{\cdot}\phi^1(s)ds+\int _0^{\cdot}\phi^2(s)ds.$ Since  $\int _0^{t}\phi^1(s)ds\in \int _0^{t}D_H(F_1)(s)ds=F(t)$, then $\int _0^{\cdot}\phi^1(s)ds\in \mathcal{IS}(F_1)$. In the same way,  $\int _0^{\cdot}\phi^2(s)ds\in \mathcal{IS}(F_2)$, and therefore, $\mathcal{IS}(F_1+F_2)\subset \mathcal{IS}(F_1)+\mathcal{IS}(F_2)$. For the proof of a reverse inclusion it is enough to note that taking $f_1\in \mathcal{IS}(F_1)$ and $f_2\in\mathcal{IS}(F_2)$ their sum belongs to $SV_p(F_1+F_2)$ and the sum of their derivatives belongs to $S_{L^p}(D_H(F_1+F_2))$. Hence, $f_1+f_2\in \mathcal{IS}(F_1+F_2)$. 

Now let us remark that the operator $J:BV_p(R^d)\rightarrow R^d$ defined by the formula $J(f)=\int_0^tfdg$ is linear. From this we get
\[(\mathcal{IS})\int_0^t(F_1+F_2)dg=J(\mathcal{IS}(F_1+F_2))=J(\mathcal{IS}(F_1)+\mathcal{IS}(F_2))\]
\[=J(\mathcal{IS}(F_1))+J(\mathcal{IS}(F_2))=(\mathcal{IS})\int_0^t(F_1)dg+(\mathcal{IS})\int_0^t(F_2)dg.\]
 Hence the first statement follows.

\bigskip

 We show that the set $\mathcal{IS}(F)$ is bounded in the space $(BV_p(R^d),\|\cdot \|_{V_p})$.  Let $f\in \mathcal{IS}(F)$ be arbitrarily taken. By assumption, the set $S_{L^p}(D_H(F))$ is  bounded in $L^p$ norm by some constant $M$. Since $f'\in S_{L^p}(D_H(F))$ then $(V_p(f))^{1/p}=(\int _0^T\|f'(s)\|^pds)^{1/p}\leq M$. 

Moreover, $\|f(t)-f(0)\|\leq T^{1-1/p}(V_p(f))^{1/p}$ for every $t\in [0,T]$ by Proposition \ref{Prop1}(b). This implies
$$
\|f\|_{V_p}=\|f\|_{\infty}+(V_p(f))^{1/p}\leq \sup_{h\in \mathcal{IS}(F)}\|h(0)\|+T^{1-1/p}M+M
$$
Then, for every $f\in \mathcal{IS}(F)$, we get by Corollary \ref{Cor1}
$$
\| \int_{0}^{\cdot }fdg\| _{\alpha }\leq \left( \|f\|_{\infty }+C(\alpha ,p)\left( Var_{p}(f)\right) ^{1/p}\right) M_{\alpha}\left( g\right) (1+T^{\alpha })
$$
$$
\leq \sup_{h\in \mathcal{IS}(F)}\|h(0)\|+(T^{1-1/p})M+C(\alpha ,p)MT^{1-1/p} M_{\alpha}\left( g\right) (1+T^{\alpha })
$$
$$
=\sup_{h\in \mathcal{IS}(F)}\|h(0)\|+(T^{1-1/p})M(1+C(\alpha ,p) M_{\alpha}\left( g\right) (1+T^{\alpha })).
$$
Using Corollary \ref{Cor1} once again, we obtain in a similar way
$$
\| \int_{s}^{t}fdg\|\leq \left( \|f\|_{\infty }+C(\alpha ,p)\left( Var_{p}(f)\right) ^{1/p}\right) M_{\alpha}\left( g\right) (t-s)^{\alpha }
$$
$$
\leq \sup_{h\in \mathcal{IS}(F)}\|h(0)\|+(T^{1-1/p})M(1+C(\alpha ,p) M_{\alpha}\left( g\right) T^{\alpha }).
$$
Thus we obtain the appropriate boundedness of both integrals.
}
\end{proof}

\section{Disclosure of potential conflicts of interest}

Conflict of Interest: The authors declare that they have no conflict of interest.

\section{Declarations:}

To be used for non-life science journals

Funding (information that explains whether and by whom the research was supported) Not applicable

Conflicts of interest/Competing interests (include appropriate disclosures) Not applicable

Availability of data and material (data transparency) Not applicable

Code availability (software application or custom code) Not applicable

\end{document}